\title{Convergence Rates for Exponentially Ill-Posed Inverse Problems with Impulsive Noise}
\author{Claudia K\"onig\footnotemark[2],  Frank Werner\footnotemark[3] and Thorsten Hohage\footnotemark[4]}
\DeclareMathOperator*{\argmin}{argmin}
\newcommand{\X}{\mathcal X}
\newcommand{\Op}{F}
\newcommand{\OpL}{T}
\newcommand{\sol}{f}
\newcommand{\udag}{\sol^\dagger}
\newcommand{\data}{g}
\newcommand{\gdag}{\data^\dagger}
\newcommand{\gobs}{\data^{\rm obs}}
\newcommand{\ualdel}{\widehat{\sol}_\alpha}
\newcommand{\manifold}{\mathbb{M}}
\newcommand{\mc}{\mathbb{P}}
\newcommand{\mi}{\manifold\setminus\mc}
\newcommand{\R}{\mathcal R}
\newcommand{\A}{\mathcal A}
\newcommand{\E}{\mathcal E}
\newcommand{\Z}{\mathcal Z}
\newcommand{\Borel}{\mathfrak{B}}
\newcommand{\breg}[1]{\mathcal D\left(#1,\udag\right)}
\newcommand{\Legr}{p}
\newcommand{\SHProj}{P}
\newcommand{\calU}{\mathcal{U}}
\newcommand{\D}{\,\mathrm{d}}
\newcommand{\Rset}{\mathbb R}
\newcommand{\Cset}{\mathbb C}
\newcommand{\Nset}{\mathbb N}
\newcommand{\Zset}{\mathbb Z}
\renewcommand{\SS}{\mathbb{S}}
\begin{document}

\newtheorem{thm}{Theorem}[section]
\newtheorem{prop}[thm]{Proposition}
\newtheorem{lem}[thm]{Lemma}
\newtheorem{cor}[thm]{Corollary}
\newtheorem{rem}[thm]{Remark}
\newtheorem{ex}[thm]{Example}
\newtheorem{ass}{Assumption}
\newtheorem{Def}[thm]{Definition}

\maketitle

\renewcommand{\thefootnote}{\fnsymbol{footnote}}

\footnotetext[2]{Institute for Mathematical Stochastics, University of G\"ottingen, Germany and Inverse Problems in Biophysics Group, Max Planck Institute for Biophysical Chemistry, G\"ottingen, Germany (claudia-juliane.koenig@mathematik.uni-goettingen.de)}
\footnotetext[3]{Felix Bernstein Institute for Mathematical Statistics in the Biosciences, University of G\"ottingen, Germany and Inverse Problems in Biophysics Group, Max Planck Institute for Biophysical Chemistry, G\"ottingen, Germany (Frank.Werner@mpibpc.mpg.de)}
\footnotetext[4]{Institute for Numerical and Applied Mathematics, University of G\"ottingen, Germany (hohage@math.uni-goettingen.de)}

\renewcommand{\thefootnote}{\arabic{footnote}}

\begin{abstract}
This paper is concerned with exponentially ill-posed operator equations with additive impulsive noise on the right hand side,  
i.e.\ the noise is large on a small part of the domain and small or zero outside. 
It is well known that Tikhonov regularization with an $L^1$ data fidelity term outperforms Tikhonov regularization with an $L^2$ fidelity term in this case.  
This effect has recently been explained and quantified for the case of finitely smoothing operators. 
Here we extend this analysis to the case of infinitely smoothing forward operators under standard Sobolev 
smoothness assumptions on the solution, i.e.\ exponentially ill-posed inverse problems. 
It turns out that high order polynomial rates of convergence in the size of the support of large noise 
can be achieved 
rather than the poor logarithmic convergence rates typical for exponentially ill-posed problems. 
The main tools of our analysis are Banach spaces of analytic functions and interpolation-type 
inequalities for such spaces. 
We discuss two examples, the (periodic) backwards heat equation and an inverse problem in gradiometry. 
\end{abstract}
\pagestyle{myheadings}
\thispagestyle{plain}
\markboth{C.~K\"onig,  F.~Werner and T.~Hohage}{Convergence Rates for Exponentially Ill-Posed Inverse Problems with Impulsive Noise}

{\it Keywords:}  variational regularization, impulsive noise, spaces of analytic functions \\[0.1cm]

{\it AMS classification numbers: } 65J20, 65K10, 65J22, 46B70

\section{Introduction}

In this work we analyze Tikhonov-type regularization for exponentially ill-posed problems where the data are corrupted by impulsive noise.
We suppose that the measurements are described by functions $\gobs = \gdag + \xi \in L^1\left(\manifold\right)$ on a submanifold $\manifold \subset \Rset^d$ where $\gdag$ denotes the exact data and $\xi$ the noise function.
The noise $\xi$ is called impulsive if $\left|\xi\right|$ is large on a small part of $\manifold$ and small or even zero elsewhere. Impulsive noise naturally occurs in digital image acquisition due to faulty memory locations or any kind of physical measurements with malfunctioning receivers. Inverse problems with impulsive noise have been studied extensively in the literature (see e.g.\  \cite{chn04,cjk10a,cj12} and the references therein), and several authors \cite{bo04,fh10,g10b,cj12,cjk10b,wh12} also analyzed Tikhonov-type regularization where reconstructions are defined as minimizers of a generalized Tikhonov functional
\begin{equation}\label{eq:tik}
\ualdel \in \argmin\limits_{f \in D\left(F\right)} \left[\frac{1}{\alpha r} \left\Vert F\left(f\right) - \gobs\right\Vert_{L^r \left(\manifold\right)}^r + \R\left(f\right)\right].
\end{equation}
Here $\X$ is a Banach space, $F : D(F)\subset \X \to L^1 \left(\manifold\right)$ the forward operator, $\R : \X \to \left(-\infty, \infty\right]$ a convex penalty term stabilizing the reconstructions, $r \geq 1$, and $\alpha>0$ is a regularization parameter. Most common examples for $\R$ are powers of Banach space norms $\R\left(f\right) = \frac1q \left\Vert f-f_0\right\Vert_{\X}^q$ with $f_0 \in \X$ and $q \geq 1$ or total variation-type functionals. 

It is well known from several numerical experiments \cite{cjk10b,js12,j11} that the choice $r = 1$ in \eqref{eq:tik} leads to much better reconstructions than $r = 2$, and several efficient algorithms have been developed to minimize \eqref{eq:tik} with $r = 1$ (see \cite{kkm05,lsds11,n02,n04,ygo07,yzy09}). The reconstruction improvements are even more striking if the forward problem arises from parameter identification problems in PDEs with smooth solutions \cite{cj12}.

The aforementioned analysis of \eqref{eq:tik} always requires $\left\Vert \xi\right\Vert_{L^r \left(\manifold\right)} \to 0$, and is not able to fully explain the remarkable difference between $r = 1$ and $r = 2$. Recently, a new model to describe the impulsiveness of $\xi$ has been proposed by the last two authors in \cite{hw14}. Following this approach, we will assume that
\begin{equation}\label{eq:noisemodel}
\exists~\mc \in \Borel \left(\manifold\right): \qquad \left\Vert \xi\right\Vert_{L^1 \left(\mi\right)} \leq \varepsilon, \qquad \left|\mc\right| \leq \eta.
\end{equation}
Here $\Borel \left(\manifold\right)$ denotes the Borel $\sigma$-algebra of $\manifold$ and $\varepsilon, \eta \geq 0$ are noise parameters. Under this model, the main result of \cite{hw14} can be described as follows: If $F$ maps $\X$ Lipschitz continuously into a Sobolev space $W^{k,p} \left(\manifold\right)$ with $k>d/p$ and if the smoothness of the exact solution $\udag$ is described by a variational inequality with index function $\varphi$ (cf. \eqref{eq:vsc} for details; this is ensured under standard conditions as we will discuss after Assumption \ref{ass:vsc} and in Corollary \ref{cor:vsc}), then the Bregman distance between $\udag$ and $\ualdel$ is bounded by 
\begin{equation}\label{eq:rates}
\breg{\ualdel} \leq C_1 \frac{\varepsilon}{\alpha} + C_2 \frac{\eta^{2\left(\frac{k}{d} + 1\right) - \frac{2}{p}}}{\alpha^2} + C_3 (-\varphi)^*\left(-\frac{1}{\alpha}\right)
\end{equation}
with the Fenchel conjugate $(-\varphi)^*(s):=\sup_{t\geq 0}(st+\varphi(t))$ of $-\varphi$ (extended by $\infty$ on $(-\infty,0)$). 
If we set $\eta = 0$ and hence $\varepsilon = \left\Vert \xi\right\Vert_{L^1 \left(\manifold\right)}$, 
this result basically coincides with the estimates proven in the literature on regularization 
with Banach norms \cite{bo04,fh10,g10b,cj12,cjk10b}. Nevertheless, for mostly impulsive noise we expect $\varepsilon \approx 0$ and very small $\eta>0$. In this situation the remarkable improvement is given by the higher exponent $2\left(\frac{k}{d} + 1\right) - \frac{2}{p}$ of $\eta$, which is fully determined by the smoothing properties of $F$.

In this work we study \eqref{eq:tik} in case that the range of $F$ consists of analytic functions. 
This situation typically arises e.g.\  in parameter identification problems in PDEs with remote measurements. Thus we will obtain \eqref{eq:rates} for arbitrary $k$ and $p$ (probably influencing the constants), implying super-algebraic decay in $\eta$. Intuitively, as the smoothing property of the forward operator is of exponential type (if measured by the decay rate of the singular values), we may 
hope for an even better expression in $\eta$, namely an exponential one. Taking into account that in exponentially ill-posed cases the function $\varphi$ is typically only logarithmic, this then implies logarithmic rates in $\varepsilon$, but still polynomial rates in $\eta$ under an optimal choice of $\alpha$.

This study is organized as follows: In the following \S~2 we recall and adapt general 
convergence rate results for Tikhonov regularization \eqref{eq:tik} under variational 
source conditions to our noise model in combination with a new interpolation-type inequality. 
The variational source condition will be validated under classical spectral source conditions. \S~3 contains the construction and properties of Banach spaces 
$\A^\lambda\left(\manifold\right)$ of analytic functions on three manifolds 
$\manifold$ (circles, intervals, and spheres) as well as certain 
interpolation-type inequalities for these spaces. The index $\lambda$ is always 
a weight function characterizing the growth of analytic extensions. 
If the forward operator additionally maps Lipschitz continuously into $\A^\lambda\left(\manifold\right)$, then the general convergence analysis from \S~2 is applicable. This mapping property is then verified for two practical examples in \S~4, the (periodic) backwards heat equation and an inverse problem in gradiometry. We end this paper with some conclusions in \S~5.

\section{Generalized Tikhonov regularization}\label{sec:tik}

This section is devoted to the analysis of Tikhonov regularization \eqref{eq:tik}. For the whole section let $\X$ be a Banach space, $r \geq 1$ and $F:\mathcal{D}(F) \subset \X \to L^r\left(\manifold\right)$ an operator.

As discussed in \cite{hw14} a minimizer $\ualdel$ of \eqref{eq:tik} exists under reasonable assumptions on $F$ and $\R$. Furthermore, it can be proven by standard arguments that $\ualdel$ is stable w.r.t.\ $\gobs$ in a suitable sense and converges to $\udag$ as $\|\xi\|_{L^r\left(\manifold\right)} \to 0$ if $\alpha = \alpha\left(\|\xi\|_{L^r\left(\manifold\right)}\right)$ is chosen such that $\|\xi\|^r_{L^r\left(\manifold\right)}/\alpha \to 0$ and $\alpha \to 0$. For details we refer to \cite[Sec. 2.1]{hw14} and the references therein. 

\subsection{Assumptions and merit discussion}

In the following we state and discuss our assumptions used to prove rates of convergence. 
As usual an variational regularization theory will establish convergence estimates w.r.t.\ 
the Bregman distance $\breg{\ualdel}$ defined by
\begin{align*}
\breg{\ualdel}:= \R\left(f\right) - \R\left(\udag\right) - \langle f^*, f - \udag \rangle,
\end{align*}
where $f^* \in \partial \R(\udag)$ is a subgradient of $\R$ at $\udag$. Note that $\breg{f} = \left\Vert f-\udag\right\Vert_{\X}^2$ if $\R\left(f\right) = \left\Vert f-f_0\right\Vert_{\X}^2$ and $\X$ is a Hilbert space.

Moreover, as in many other recent papers we will use an abstract smoothness assumption in 
the form of a variational inequality: 
\begin{ass}[Variational source condition]\label{ass:vsc}
We assume that there exists $\beta >0$ and a concave, increasing function $\varphi : [0, \infty) \rightarrow [0, \infty)$ with $\varphi(0)=0$ such that 
\begin{equation}\label{eq:vsc}
\beta \breg{f} \leq \R(f)-\R(\udag) + \varphi \left(\|F(f)-F(\udag)\|_{L^r\left(\manifold\right)}^r\right)
\qquad\mbox{for all }f \in \mathcal{D}(F).
\end{equation}
\end{ass}

If $F : \X \to L^2\left(\manifold\right)$ is a bounded linear operator between Hilbert spaces, $\R \left(f\right) = \left\Vert f-f_0\right\Vert_{\X}^2$, $f_0 \in \X$ and $r = 2$, then it has been shown 
\cite{f12,fhm11} that \eqref{eq:vsc} is both sufficient and necessary for convergence rates of order $\varphi\left(\left\Vert \xi\right\Vert_{L^2\left(\manifold\right)}^2\right)$. Note that this furthermore shows that a spectral source condition must imply a variational source condition with a function $\varphi$ yielding the same rates of convergence in this situation. Another proof of this implication has been given by Flemming \cite{f12} for general $\varphi$. 

We are mainly interested in exponentially ill-posed problems, where the most relevant case for the function $\varphi$ in Assumption \ref{ass:vsc} is $\varphi(t)=C\varphi_p(t)$ with 
\begin{equation}\label{eq:varphip}
\varphi_p(t)= \left(-\ln\left(t\right)\right)^{-p}, \qquad t \leq \exp\left(-1\right).
\end{equation}
This function can be defined on all of $\Rset$ by concave extension, but for our asymptotic studies only the behavior close to $t = 0$ is of relevance.
As shown by Flemming \cite{f12}, if $\X$ is a Hilbert space, $r = 2$ and $F$ is bounded 
and linear, the spectral source condition 
$\udag = \varphi_p\left(\Op^*\Op\right) w$ implies a variational source condition \eqref{eq:vsc} 
with $\varphi = \beta' \varphi_{2p}$ and $\beta'>0$ depending on $\|w\|$, $\|\Op\|$, and $p$.  

For nonlinear operators $F$, the condition \eqref{eq:vsc} can be seen as a combination of source 
and nonlinearity condition (see e.g. \cite[Props. 3.35 and 3.38]{s08}). For inverse medium scattering 
problems it was recently shown with the help of complex geometrical optics solutions (cf. \cite{HW:15}) 
that a variational source condition with $\varphi$ 
of the form \eqref{eq:varphip} holds true if the solution belongs to a Sobolev ball, and $p$ 
is an explicit function of the Sobolev index. 

Both techniques for verifying Assumption \ref{ass:vsc} mentioned above yield \eqref{eq:vsc} 
with $r=2$. To obtain $L^1$-variational source conditions from $L^2$-variational source 
conditions, we will use the following smoothing properties of $F$, which also play a central role 
in the rest of our analysis: 
\begin{ass}[smoothing properties]\label{ass:interp_ineq}
Let $\X$ be a Banach space, $F: \mathcal{D}(F) \subset \X \rightarrow L^1(\manifold)$ and $\R:\X \rightarrow (-\infty, \infty]$ a convex, lower semi-continuous penalty functional. Assume that there are $\delta_0 >0$, $q>1$ and an increasing function $\gamma:[0, \delta_0] \rightarrow [0, \infty)$ with $\lim_{\delta \rightarrow 0} \gamma(\delta) =0$ such that 
\begin{align}\label{Condition3}
\|F\left(f\right) - \gdag\|_{L^\infty\left(\manifold\right)} \leq \gamma(\delta) \breg{f}^{\frac1q} + \frac{1}{\delta} \|F\left(f\right) - \gdag\|_{L^1(\manifold)}.
\end{align}
for all $0<\delta \leq \delta_0$ and all $f \in \X$.
\end{ass}

\begin{rem}\label{rem:smoothness}
Suppose there exists a Banach space $\Z\subset L^{\infty}(\manifold) \cap L^1(\manifold)$ such that
\begin{itemize}
\item[(A)] $F$ maps Lipschitz continuously into $\Z$, i.e.
\begin{align}\label{condition1}
\|F(f)-F(\udag)\|_{\Z} \leq C \breg{f}^{\frac{1}{q}}
\end{align}
with $C>0$, $q>1$ and
\item[(B)] there exists $\delta_0 >0$ and an increasing function $\gamma:[0, \delta_0) \rightarrow [0, \infty)$ with $\lim_{\delta \rightarrow 0}{\gamma(\delta)} =0$ such that for all $0< \delta \leq \delta_0$ and all $g \in \Z$
\begin{align}\label{condition2}
\|g\|_{L^\infty\left(\manifold\right)} \leq \gamma(\delta) \|g\|_{\Z} + \frac{1}{\delta} \|g\|_{L^1(\manifold)}.
\end{align}
\end{itemize}
Inserting \eqref{condition1} in \eqref{condition2} with $g =  F\left(f\right) - \gdag$ proves that Assumption \ref{ass:interp_ineq} holds true with $\gamma(\delta)$ in  \eqref{Condition3} replaced by $C \gamma(\delta)$.
\end{rem}

An inequality of the form \eqref{condition2} is shown in \cite[Lem. 3.2]{hw14} in the case 
of Sobolev spaces on a bounded Lipschitz domain $\manifold \subset \Rset^d$,  
$\Z = W^{k,p} \left(\manifold\right)$ with $k > \frac{d}{p}$. In this case we obtained $\gamma(\delta)= c_1 \delta^{\frac{k}{d} - \frac{1}{p}}$ where $c_1>0$ is a constant depending only on $\manifold, k$ and $p$, but not on $\delta \in (0, \delta_0)$. As \eqref{condition2} holds for any $0<\delta \leq \delta_0$, it can (in this special case of $\gamma$) be reformulated as
\begin{equation}\label{eq:interp_ineq_classic}
\|g\|_{L^\infty\left(\manifold\right)} \leq \inf\limits_{0<\delta \leq \delta_0} \left[c_1 \delta^{\frac{k}{d} - \frac{1}{p}}\|g\|_{W^{k,p} \left(\manifold\right)} + \frac{1}{\delta} \|g\|_{L^1(\manifold)}\right] \leq C \|g\|_{W^{k,p} \left(\manifold\right)}^{\frac1{u+1}}\|g\|_{L^1(\manifold)}^{\frac{u}{u+1}}
\end{equation}
with $u := k/d - 1/p$. If $\left\Vert g \right\Vert_{L^1\left(\manifold\right)}
\leq c_1 u \delta_0\left\Vert g \right\Vert_{W^{k,p}\left(\manifold\right)}$, 
this follows from calculating the infimum using differentiation w.r.t.\ 
$\delta$. However, since $W^{k,p}\left(\manifold\right)$ is continuously 
embedded into $L^1(\manifold)$, \eqref{eq:interp_ineq_classic} holds true 
for all $g$, with a larger constant $C$ if $c_1 u \delta_0$ is 
smaller than the embedding constant.
Consequently, \eqref{eq:interp_ineq_classic} is of the form of a classical interpolation inequality and \eqref{condition2} can be interpreted as an interpolation inequality as well.

\smallskip

The more general formulations \eqref{Condition3} and \eqref{condition2} allow for a wider class of spaces $\Z$, in particular spaces of analytic functions, which are more suitable for exponentially smoothing operators $F$. As in \eqref{eq:interp_ineq_classic} the asymptotic behavior of $\gamma \left(\delta\right)$ as $\delta \searrow 0$ will depend on the smoothing properties of $F$, more precisely the more smoothing $F$ the faster $\gamma \left(\delta\right) \searrow 0$ as $\delta \searrow 0$. In \S~3 we will introduce 
spaces $\Z$ which guarantee \eqref{condition2} with exponentially decaying $\gamma$.

\smallskip

Now we are in position to prove the following proposition which validates Assumption \ref{ass:vsc} under a spectral source condition. As we are mainly interested in exponentially ill-posed problems, we restrict ourselves to logarithmic source conditions, which are appropriate in this case.
\begin{prop}\label{prop:vsc_valid}
Let $\X$ be a Hilbert space, $\manifold \subset \Rset^d
$ and choose $\R\left(f\right) =  \left\Vert f\right\Vert_{\X}^2$, $r = 1$. Suppose that 
$\Op$ satisfies Assumption~\ref{ass:interp_ineq} with $q = 2$ and  \eqref{eq:noisemodel} holds true.  
\begin{enumerate}
\item For all $\alpha>0$, $0 \leq \eta \leq \delta_0/2$, and $\varepsilon\geq 0$ 
the estimator $\ualdel$ from \eqref{eq:tik} is bounded by
\begin{equation}\label{eq:energy_est}
\left\Vert \ualdel\right\Vert_{\X} \leq 
\left\Vert \udag\right\Vert_{\\X}+ \sqrt{\frac{2\varepsilon}{\alpha}} +\frac{2\eta\gamma\left(2\eta\right)}{\alpha}\,.
\end{equation}
\item Suppose $\udag$ satisfies the variational source condition (Assumption \ref{ass:vsc}) 
with $r=2$, $\alpha$ is chosen such that 
\begin{equation}\label{eq:PC_condition}
\sqrt{\frac{2\varepsilon}{\alpha}}+\frac{2\eta \gamma\left(2\eta\right)}{\alpha} \leq 
C_{\rm e}
\end{equation}
for all $\varepsilon, \eta$  with some constant $C_{\rm e}$, and $\Op$ satisfies the 
Lipschitz condition \eqref{condition1}. Then there exists $C_{\rm v}>0$ independent of 
$\alpha,\varepsilon$ and $\eta$ such that $\udag$ satisfies the variational source 
condition \eqref{eq:vsc} with $r=1$ 
for all $\sol= \ualdel$ and $\varphi$ replaced by $\varphi(C_{\rm v}\cdot)$. 
\end{enumerate}
\end{prop}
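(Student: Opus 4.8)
The plan is to derive the energy estimate of item~1 from the minimality of $\ualdel$ together with the smoothing property \eqref{Condition3}, and then to obtain item~2 from item~1 by means of the elementary inequality $\|h\|_{L^2(\manifold)}^2\le\|h\|_{L^\infty(\manifold)}\|h\|_{L^1(\manifold)}$. For item~1 I would write $h:=F(\ualdel)-\gdag$, so that $F(\ualdel)-\gobs=h-\xi$ and $F(\udag)-\gobs=-\xi$, and recall that here $\breg{\ualdel}=\|\ualdel-\udag\|_\X^2$. Comparing the value of the Tikhonov functional \eqref{eq:tik} (with $r=1$ and $\R=\|\cdot\|_\X^2$) at $\ualdel$ with its value at $\udag$ yields
\[
\tfrac1\alpha\|h-\xi\|_{L^1(\manifold)}+\|\ualdel\|_\X^2\le\tfrac1\alpha\|\xi\|_{L^1(\manifold)}+\|\udag\|_\X^2 .
\]
Splitting $\manifold=\mc\cup\mi$ and using, on $\mc$, the bounds $\|h-\xi\|_{L^1(\mc)}\ge\|\xi\|_{L^1(\mc)}-\eta\|h\|_{L^\infty(\manifold)}$ and $\|\xi\|_{L^1(\manifold)}\le\|\xi\|_{L^1(\mc)}+\varepsilon$, the uncontrolled quantity $\|\xi\|_{L^1(\mc)}$ cancels, and what remains is
\[
\|\ualdel\|_\X^2+\tfrac1\alpha\|h-\xi\|_{L^1(\mi)}\le\|\udag\|_\X^2+\tfrac\varepsilon\alpha+\tfrac\eta\alpha\|h\|_{L^\infty(\manifold)} .
\]

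Next I would apply \eqref{Condition3} with $q=2$ and $\delta=2\eta$ (admissible since $\eta\le\delta_0/2$), bound $\|h\|_{L^1(\manifold)}\le\eta\|h\|_{L^\infty(\manifold)}+\|h-\xi\|_{L^1(\mi)}+\varepsilon$ using again the split of $\manifold$, and absorb the resulting term $\tfrac12\|h\|_{L^\infty(\manifold)}$, obtaining
\[
\tfrac\eta\alpha\|h\|_{L^\infty(\manifold)}\le\tfrac{2\eta\gamma(2\eta)}{\alpha}\|\ualdel-\udag\|_\X+\tfrac1\alpha\|h-\xi\|_{L^1(\mi)}+\tfrac\varepsilon\alpha .
\]
Inserting this into the previous display makes the terms $\tfrac1\alpha\|h-\xi\|_{L^1(\mi)}$ cancel, and with $A:=\|\ualdel\|_\X$, $B:=\|\udag\|_\X$ one is left with $A^2\le B^2+\tfrac{2\varepsilon}\alpha+\tfrac{2\eta\gamma(2\eta)}{\alpha}(A+B)$. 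Solving this quadratic inequality for $A$ — equivalently, using $\sqrt{(c+2B)^2+4s^2}\le c+2B+2s$ with $c=2\eta\gamma(2\eta)/\alpha$ and $s=\sqrt{2\varepsilon/\alpha}$ — gives \eqref{eq:energy_est}; the degenerate case $\eta=0$ is immediate since then $\|\xi\|_{L^1(\manifold)}=\|\xi\|_{L^1(\mi)}\le\varepsilon$.

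For item~2, item~1 combined with \eqref{eq:PC_condition} shows that $\breg{\ualdel}^{1/2}=\|\ualdel-\udag\|_\X\le 2\|\udag\|_\X+C_{\rm e}$, so $\breg{\ualdel}$ is bounded uniformly in $\varepsilon,\eta$ and $\alpha$. Together with the Lipschitz condition \eqref{condition1} (with $q=2$) and the continuous embedding $\Z\hookrightarrow L^1(\manifold)$ this also bounds $\|F(\ualdel)-\gdag\|_{L^1(\manifold)}$ uniformly, and feeding both bounds into \eqref{Condition3} with the \emph{fixed} parameter $\delta=\delta_0$ produces a constant $C_{\rm v}$, independent of $\varepsilon,\eta,\alpha$, with $\|F(\ualdel)-F(\udag)\|_{L^\infty(\manifold)}\le C_{\rm v}$. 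Consequently
\[
\|F(\ualdel)-F(\udag)\|_{L^2(\manifold)}^2\le\|F(\ualdel)-F(\udag)\|_{L^\infty(\manifold)}\,\|F(\ualdel)-F(\udag)\|_{L^1(\manifold)}\le C_{\rm v}\,\|F(\ualdel)-F(\udag)\|_{L^1(\manifold)},
\]
and inserting this into the assumed $L^2$ variational source condition (Assumption~\ref{ass:vsc} with $r=2$) at $f=\ualdel$, together with monotonicity of $\varphi$, gives
\[
\beta\breg{\ualdel}\le\R(\ualdel)-\R(\udag)+\varphi\left(\|F(\ualdel)-F(\udag)\|_{L^2(\manifold)}^2\right)\le\R(\ualdel)-\R(\udag)+\varphi\left(C_{\rm v}\|F(\ualdel)-F(\udag)\|_{L^1(\manifold)}\right),
\]
which is exactly \eqref{eq:vsc} with $r=1$ at $f=\ualdel$, the same $\beta$, and $\varphi$ replaced by $\varphi(C_{\rm v}\,\cdot\,)$.

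I expect the main obstacle to be the bookkeeping in item~1: the noise model \eqref{eq:noisemodel} gives no control on $\|\xi\|_{L^1(\mc)}$, so the estimates must be arranged so that this term cancels on the two sides of the minimality inequality; and, less obviously, the term $\|h-\xi\|_{L^1(\mi)}=\|F(\ualdel)-\gobs\|_{L^1(\mi)}$ produced by minimality has to be retained rather than discarded, because applying \eqref{Condition3} with $\delta=2\eta$ regenerates precisely this term with the opposite sign. Matching up these two cancellations — together with the choice $\delta=2\eta$, which is what produces the factor $\eta\gamma(2\eta)$ appearing in \eqref{eq:energy_est} — is the delicate point; item~2 is comparatively routine once the uniform bounds coming from item~1 are available.
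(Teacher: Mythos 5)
Your argument is correct and follows essentially the same route as the paper: minimality plus the splitting over $\mc$ and $\mi$ to cancel $\|\xi\|_{L^1(\mc)}$, the smoothing property \eqref{Condition3} with $\delta=2\eta$ to absorb the self-referential $L^\infty$-term, completion of the square for \eqref{eq:energy_est}, and then the interpolation $\|h\|_{L^2}^2\le\|h\|_{L^\infty}\|h\|_{L^1}$ combined with the uniform $L^\infty$-bound from part~1, \eqref{condition1}, and \eqref{Condition3} at a fixed $\delta$ to convert the $r=2$ source condition into the $r=1$ one. The only cosmetic difference is that you cancel $\|h-\xi\|_{L^1(\mi)}$ exactly, whereas the paper compares $\|F(\ualdel)-\gobs\|_{L^1(\mi)}$ and $\|F(\ualdel)-\gdag\|_{L^1(\mi)}$ up to $\varepsilon$ and drops the remaining nonnegative term.
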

\begin{proof}
\emph{Part 1:} Using the minimizing property 
\begin{align}
\frac{1}{\alpha} \left\Vert \Op(\ualdel) -\gobs\right\Vert_{L^1(\manifold)} + \R(\ualdel) \leq \frac{1}{\alpha} \|\xi\|_{L^1(\manifold)} + \R(\udag) \label{Inequality11}
\end{align}
and the noise model \eqref{eq:noisemodel} we find the estimate
\begin{equation} \label{eq:aux3}
\left\Vert \Op(\ualdel) - \gobs\right\Vert_{L^1\left(\mi\right)}  + \alpha \left\Vert 
\ualdel\right\Vert_{\X}^2 \leq \varepsilon+ \alpha \left\Vert \udag\right\Vert_{\X}^2 + \left\Vert \Op
(\ualdel) - \gdag\right\Vert_{L^1 \left(\mc\right)} 
\end{equation}
Using \eqref{Condition3} with $\delta = 2 \eta$ and $\breg{\ualdel}^{1/q}= \|\ualdel-\udag\|\leq 
\|\ualdel\|+\|\udag\|$ 
(since $\X$ is a Hilbert space) it follows
\[
\left\Vert \Op(\ualdel) - \gdag\right\Vert_{L^1 \left(\mc\right)}  
\leq \frac12 \left\Vert \Op(\ualdel) - \gdag \right\Vert_{L^1 \left(\manifold\right)} 
+ \eta \gamma\left(2\eta\right) \left(\left\Vert \ualdel \right\Vert_{\X} 
+ \left\Vert \udag\right\Vert_\X\right)
\]
which then implies
\begin{equation}\label{eq:aux2}
\left\Vert \Op(\ualdel) - \gdag\right\Vert_{L^1 \left(\mc\right)} \leq \left\Vert \Op(\ualdel) - \gdag \right\Vert_{L^1 \left(\mi\right)} + 2\eta \gamma\left(2\eta\right) \left(\left\Vert \ualdel \right\Vert_{\X} + \left\Vert \udag\right\Vert_\X\right).
\end{equation}
As
\(
\left|\left\Vert \Op(\ualdel) - \gobs\right\Vert_{L^1\left(\mi\right)} - \left\Vert \Op(\ualdel) - \gdag \right\Vert_{L^1 \left(\mi\right)}  \right| \leq \varepsilon,
\)
we find from inserting \eqref{eq:aux2} into \eqref{eq:aux3} that
\[
\left\Vert \ualdel\right\Vert_{\X}^2 \leq \frac{2\varepsilon}{\alpha} +\left\Vert \udag\right\Vert_{\X}^2 + \frac{2\eta \gamma\left(2\eta\right)}{\alpha} \left(\left\Vert \ualdel \right\Vert_{\X} + \left\Vert \udag\right\Vert_\X\right).
\]
Now we add 
$-\frac{2\eta \gamma\left(2\eta\right)}{\alpha} \left\Vert \ualdel \right\Vert_{\X}+
\frac{\eta^2 \gamma\left(2\eta\right)^2}{\alpha^2}$ on both sides, complete the squares and obtain
\[
\left(\left\Vert \ualdel \right\Vert - \frac{\eta \gamma \left(2 \eta\right)}{\alpha} \right)^2 \leq \frac{2 \varepsilon}{\alpha} + \left( \left\Vert \udag\right\Vert + \frac{\eta \gamma \left(2 \eta\right)}{\alpha} \right)^2 \leq  \left(\sqrt{\frac{2 \varepsilon}{\alpha}} + \left\Vert \udag\right\Vert + \frac{\eta \gamma \left(2 \eta\right)}{\alpha} \right)^2.
\]
Taking square roots yields the desired estimate.\\
\emph{Part 2:} Note that 
$
\left\Vert \Op\left(f\right) - \gdag\right\Vert_{L^2\left(\manifold\right)}^2 \leq \left\Vert \Op\left(f\right) - \gdag\right\Vert_{L^\infty\left(\manifold\right)}\left\Vert \Op\left(f\right) - \gdag\right\Vert_{L^1\left(\manifold\right)}.
$
Furthermore \eqref{Condition3} together with \eqref{condition1} yields
\begin{align*}
\left\Vert \Op\left(f\right) - \gdag\right\Vert_{L^\infty\left(\manifold\right)} &\leq \gamma\left(\delta\right) \left\Vert f-\udag\right\Vert_{\X} + \frac{1}{\delta} \left\Vert \Op\left(f\right) - \gdag\right\Vert_{L^1\left(\manifold\right)} \\
&\leq \left(\gamma\left(\delta\right)+ \frac{C}{\delta} \right) \left\Vert f- \udag\right\Vert_{\X} 
\end{align*}
for arbitrary $\delta$. Together with the first part this shows that 
$\| \Op\left(f\right) - \gdag\|_{L^2(\manifold)}^2\leq 
C_{\rm v}\|\Op\left(f\right) - \gdag\|_{L^1(\manifold)}$ with 
$C_{\rm v}:= (\gamma\left(\delta\right)+C/\delta)(2\|\udag\|_{\X}+C_{\rm e})$, which 
yields the assertion. 
\end{proof}

As mentioned after \eqref{eq:varphip}, spectral logarithmic source conditions imply 
variational logarithmic source conditions. As $\varphi_{2p}(C_{\rm v}x)=\varphi_{2p}(x)+o(1)$ 
as $x\searrow 0$, we obtain: 
\begin{cor}\label{cor:vsc}
If $\X$ is a Hilbert space, $F:\X \to L^2(\manifold)$ is linear, the spectral source 
condition $\udag=\varphi_p(F^*F)w$ is satisfied, and \eqref{eq:PC_condition} holds true. 
Then the variational source condition \eqref{eq:vsc} holds true for $\sol = \ualdel$ 
with $r = 1$, and $\varphi = \beta'\varphi_{2p}$ where 
$\beta'>0$ depends only on $\left\Vert w \right\Vert_{\X}$, $\|F\|$, $p$, $\delta_0$, and $C_{\rm e}$. 
\end{cor}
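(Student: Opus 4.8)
The plan is to compose three facts. First I would invoke Flemming's result \cite{f12} recalled above: since $\X$ is a Hilbert space, $F$ is bounded and linear and $\R(f)=\|f\|_\X^2$, the spectral source condition $\udag=\varphi_p(F^*F)w$ implies the variational source condition \eqref{eq:vsc} with $r=2$ and index function $\varphi=\beta_0\varphi_{2p}$, where $\beta_0>0$ depends only on $\|w\|_\X$, $\|F\|$ and $p$. I would also record that, by functional calculus and monotonicity of $\varphi_p$, one has $\|\udag\|_\X\le\varphi_p(\|F\|^2)\,\|w\|_\X$, so $\|\udag\|_\X$ is likewise controlled by $\|w\|_\X$, $\|F\|$ and $p$.

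Second, I would apply Proposition~\ref{prop:vsc_valid}. We remain in its setting, so Assumption~\ref{ass:interp_ineq} (with $q=2$) and the noise model \eqref{eq:noisemodel} are in force, and for a bounded linear $F$ the Lipschitz estimate \eqref{condition1} holds with $q=2$ and $C=\|F\|_{\X\to\Z}$ because $\breg{f}=\|f-\udag\|_\X^2$. With \eqref{eq:PC_condition} assumed, Part~2 of Proposition~\ref{prop:vsc_valid} applies to the $L^2$-variational source condition of the first step and yields \eqref{eq:vsc} with $r=1$ for $\sol=\ualdel$ and index function $t\mapsto\beta_0\varphi_{2p}(C_{\rm v}t)$, where, fixing $\delta=\delta_0$, $C_{\rm v}=(\gamma(\delta_0)+C/\delta_0)(2\|\udag\|_\X+C_{\rm e})$. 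By the first step $C_{\rm v}$ depends only on $\|w\|_\X$, $\|F\|$, $p$, $\delta_0$ and $C_{\rm e}$ (the space $\Z$ and the function $\gamma$ being regarded as fixed by $F$).

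Third, I would absorb $C_{\rm v}$ into the constant. It suffices to find $\beta'>0$ with $\beta_0\varphi_{2p}(C_{\rm v}t)\le\beta'\varphi_{2p}(t)$ for all $t$ in the range of $t=\|F(\ualdel)-\gdag\|_{L^1(\manifold)}$, since \eqref{eq:vsc} for $\sol=\ualdel$ involves $\varphi$ only at this single value. By Part~1 of Proposition~\ref{prop:vsc_valid} together with \eqref{eq:PC_condition} one has $\|\ualdel\|_\X\le\|\udag\|_\X+C_{\rm e}$, so this range is contained in a bounded interval $[0,T]$ with $T$ depending only on $\|F\|$, $\|\udag\|_\X$ and $C_{\rm e}$. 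On $[0,T]$ the ratio $\varphi_{2p}(C_{\rm v}t)/\varphi_{2p}(t)$ is continuous, converges to $1$ as $t\searrow0$ (checked directly from the logarithm, the concave extension of $\varphi_{2p}$ taking care of $t>e^{-1}$), hence is bounded; this gives a finite $\beta'$ depending only on $\beta_0$ and $C_{\rm v}$, and thus on the asserted quantities. Setting $\varphi=\beta'\varphi_{2p}$ completes the proof.

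I do not expect a genuine obstacle: the corollary is essentially a bookkeeping composition of the cited theorem of Flemming, Proposition~\ref{prop:vsc_valid}, and an elementary property of the logarithm. The only mildly delicate point is the third step, where one must ensure that replacing $\varphi_{2p}(C_{\rm v}\cdot)$ by a multiple of $\varphi_{2p}$ is legitimate not merely asymptotically (as remarked just before the statement) but uniformly over the bounded range of arguments that actually occurs — and the a priori bound on $\|\ualdel\|_\X$ from Part~1 of Proposition~\ref{prop:vsc_valid} is precisely what makes this harmless.
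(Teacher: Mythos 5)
Your proposal is correct and follows essentially the same route as the paper, which obtains the corollary by composing Flemming's implication (spectral $\varphi_p$-source condition $\Rightarrow$ $L^2$-variational source condition with $\beta_0\varphi_{2p}$) with Part 2 of Proposition~\ref{prop:vsc_valid} and then absorbing $C_{\rm v}$ via the remark that $\varphi_{2p}(C_{\rm v}x)=\varphi_{2p}(x)+o(1)$ as $x\searrow 0$. Your third step is in fact slightly more careful than the paper's one-line justification: you use the a priori bound on $\|\ualdel\|_\X$ from Part 1 to bound the ratio $\varphi_{2p}(C_{\rm v}t)/\varphi_{2p}(t)$ uniformly on the bounded range of relevant arguments, which is what is actually needed to get $\varphi=\beta'\varphi_{2p}$ exactly rather than only asymptotically.
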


\subsection{Rates of convergence}

Now we will present error estimates for the minimizers $\ualdel$ in \eqref{eq:tik}. 
Our analysis uses the observation by Grasmair \cite{g10b} that the approximation error can be bounded by the Fenchel conjugate function of $-\varphi$. Let us introduce a function $\psi$ by
\begin{equation}\label{eq:defi_associate}
\psi( \alpha):= (-\varphi)^* \left( -\frac{1}{\alpha}\right) = \sup\limits_{\tau\geq 0} \left(  \varphi(\tau)-\frac{\tau}{\alpha} \right), \qquad \alpha>0.
\end{equation}
Note from this  definition that $\psi$ is concave and non-decreasing.

For $\varphi_p$ defined in \eqref{eq:varphip} one obtains $\psi_p(\alpha)= \left( \ln \frac{1}{\alpha p}\right)^{-p} \left( 1+o(1)\right)$ as $\alpha \rightarrow 0$. 

\medskip
We are now in a position to formulate and prove the main result of this section, which 
closely follows \cite[Thm. 3.4]{hw14}:
\begin{thm}\label{Theorem1}
Let $\X$ be a Banach space, $F:\mathcal{D}(F) \subset \X \rightarrow L^1 \left(\manifold\right)$ be an operator and $\R: \X \rightarrow(-\infty, \infty]$ a convex, lower semi-continuous functional. 
Let $\ualdel$ be a solution of \eqref{eq:tik} with $r = 1$ and suppose that the noise function $\xi$ fulfills \eqref{eq:noisemodel}. 
If Assumptions \ref{ass:vsc} and \ref{ass:interp_ineq} hold true with $q>1$ (actually \eqref{eq:vsc} 
is only needed for $\sol=\ualdel$), then for any $b>1$ there exists $\eta_0>0$ such that
\begin{align}
\beta \breg{\ualdel} &\leq \frac{2q'}{\alpha} \epsilon + \beta^{1-q'}\left( \frac{2}{\alpha} \eta \gamma \left( \frac{2b}{b-1}\eta \right)\right)^{q'} + q' \psi(b\alpha),\label{Inequality4}\\
\frac{1}{b} \|F(\ualdel)-\gdag\|_{L^1(\manifold)} &\leq 4q'\epsilon + 2\frac{\left( 2\eta \gamma\left( \frac{2b}{b-1} \eta\right)\right)^{q'}}{(\beta \alpha)^{q'-1}} + 2\alpha q'\psi(2b\alpha)\label{Inequality5}
\end{align}
for all $0<\eta\leq \eta_0,~ \epsilon \geq 0$ and $\alpha>0$. Here and below $q'$ denotes the conjugate index of $q$, i.e.\ $\frac{1}{q} + \frac{1}{q'} =1$. 
\end{thm}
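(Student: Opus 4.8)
The plan is to follow the strategy of \cite[Thm.~3.4]{hw14}, adapting the estimates to our noise model \eqref{eq:noisemodel} and the smoothing inequality \eqref{Condition3}. The starting point is the minimizing property of $\ualdel$, namely
\[
\frac{1}{\alpha}\left\Vert F(\ualdel)-\gobs\right\Vert_{L^1(\manifold)} + \R(\ualdel)
\leq \frac{1}{\alpha}\left\Vert \xi\right\Vert_{L^1(\manifold)} + \R(\udag),
\]
which after splitting $\manifold = \mc \cup \mi$ and using $\left\Vert \xi\right\Vert_{L^1(\mi)}\leq\epsilon$ gives a bound of the shape
\[
\frac{1}{\alpha}\left\Vert F(\ualdel)-\gdag\right\Vert_{L^1(\mi)} + \R(\ualdel)-\R(\udag)
\leq \frac{2\epsilon}{\alpha} + \frac{1}{\alpha}\left\Vert F(\ualdel)-\gdag\right\Vert_{L^1(\mc)}.
\]
The key point is to control the term over the bad set $\mc$: since $|\mc|\leq\eta$ we have $\left\Vert F(\ualdel)-\gdag\right\Vert_{L^1(\mc)}\leq \eta\left\Vert F(\ualdel)-\gdag\right\Vert_{L^\infty(\manifold)}$, and then we apply \eqref{Condition3} with $\delta$ chosen proportional to $\eta$ (the factor $\tfrac{2b}{b-1}$ is forced by the bookkeeping below). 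This produces, for $\delta=\tfrac{2b}{b-1}\eta$,
\[
\left\Vert F(\ualdel)-\gdag\right\Vert_{L^1(\mc)} \leq \eta\gamma(\delta)\,\breg{\ualdel}^{1/q} + \frac{b-1}{2b}\left\Vert F(\ualdel)-\gdag\right\Vert_{L^1(\manifold)}.
\]
The fraction $\tfrac{b-1}{2b}$ in front of the $L^1$-norm is the crucial gain: it is strictly less than $\tfrac12$, so that after splitting $\left\Vert F(\ualdel)-\gdag\right\Vert_{L^1(\manifold)} = \left\Vert F(\ualdel)-\gdag\right\Vert_{L^1(\mc)}+\left\Vert F(\ualdel)-\gdag\right\Vert_{L^1(\mi)}$ and absorbing, the bad-set contribution can be moved to the left-hand side with a positive coefficient.

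Next I would invoke the variational source condition \eqref{eq:vsc} with $r=1$ (only needed at $f=\ualdel$) to replace $\R(\ualdel)-\R(\udag)$ from below by $\beta\breg{\ualdel} - \varphi(\left\Vert F(\ualdel)-\gdag\right\Vert_{L^1(\manifold)})$. Combining this with the inequality above yields an estimate of the form
\[
\beta\breg{\ualdel} + c_b\left\Vert F(\ualdel)-\gdag\right\Vert_{L^1(\manifold)}
\lesssim \frac{\epsilon}{\alpha} + \frac{\eta\gamma(\delta)}{\alpha}\breg{\ualdel}^{1/q} + \varphi\!\left(\left\Vert F(\ualdel)-\gdag\right\Vert_{L^1(\manifold)}\right),
\]
with $c_b>0$. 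The term $\tfrac{\eta\gamma(\delta)}{\alpha}\breg{\ualdel}^{1/q}$ is handled by Young's inequality in the form $ab\leq \tfrac1q a^q + \tfrac{1}{q'}b^{q'}$ (applied to $a\propto\breg{\ualdel}^{1/q}$), which splits off a small multiple of $\breg{\ualdel}$ — absorbed into the left-hand side after adjusting the constant $\beta$ — plus the term $\beta^{1-q'}\big(\tfrac{2}{\alpha}\eta\gamma(\delta)\big)^{q'}$ that appears in \eqref{Inequality4}. The term $\varphi(\left\Vert F(\ualdel)-\gdag\right\Vert_{L^1})$ is treated via the Fenchel–Young inequality $\varphi(\tau)\leq \tfrac{\tau}{b\alpha} + \psi(b\alpha)$ coming directly from the definition \eqref{eq:defi_associate} of $\psi$; choosing the parameter $b$ here is exactly what dictates $c_b$ and forces $\eta_0$ to be small enough that $\delta=\tfrac{2b}{b-1}\eta\leq\delta_0$. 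Collecting everything gives \eqref{Inequality4}. The second estimate \eqref{Inequality5} follows by re-running the same chain but this time keeping the $L^1$-discrepancy term on the left (it has a positive coefficient $c_b$ there), discarding the nonnegative $\beta\breg{\ualdel}$, multiplying through by a suitable constant, and using $\psi(2b\alpha)$ in the Fenchel step; the extra factor of $\alpha$ on the $\psi$-term in \eqref{Inequality5} reflects that we have not divided by $\alpha$ on that side.

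The main obstacle is the careful bookkeeping of the absorption arguments: one must simultaneously absorb a fraction of $\left\Vert F(\ualdel)-\gdag\right\Vert_{L^1(\manifold)}$ (arising from the bad set and from the $\varphi$-term via Fenchel) and a fraction of $\breg{\ualdel}$ (arising from Young applied to the cross term) into the left-hand side, while keeping track of how the free parameter $b>1$ and the resulting threshold $\eta_0$ enter the constants. There is no deep difficulty — every step is an elementary inequality — but the constants $2q'$, $\beta^{1-q'}$, $\tfrac{2b}{b-1}$, and the appearance of $\psi(b\alpha)$ versus $\psi(2b\alpha)$ in the two estimates must be matched exactly, and the argument of $\gamma$ must stay in $[0,\delta_0]$, which is precisely what the choice of $\eta_0$ guarantees. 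I expect the structure to be essentially identical to \cite[Thm.~3.4]{hw14}, the only genuinely new ingredient being that \eqref{Condition3} is now used with the general index function $\gamma$ rather than the power $\gamma(\delta)=c_1\delta^{k/d-1/p}$.
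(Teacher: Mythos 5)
Your proposal is correct and follows essentially the same route as the paper's proof: the same good/bad-set decomposition combined with the minimizing property, the interpolation inequality \eqref{Condition3} at $\delta=\tfrac{2b}{b-1}\eta$ with $\eta_0=\tfrac{b-1}{2b}\delta_0$, the variational source condition at $f=\ualdel$, Young's inequality on the cross term, and the Fenchel--Young bound $\varphi(\tau)\leq\tfrac{\tau}{b\alpha}+\psi(b\alpha)$ (resp.\ $\psi(2b\alpha)$ for \eqref{Inequality5}). The only cosmetic difference is that you absorb the $L^1(\mc)$ and $L^1(\mi)$ contributions separately while the paper carries the full $L^1(\manifold)$ norm through its inequality \eqref{Inequality8}; worked out, your bookkeeping reproduces the same intermediate estimate and the same constants.
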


\begin{proof}
By assumption there is a measurable $\mc$ such that $\|\xi\|_{L^1(\mi)} \leq \epsilon$ and $\left|\mc\right| \leq \eta$. Thus we have for any $g \in L^1(\manifold)$ that
\begin{align}
&\|g-\gdag\|_{L^1(\manifold)} + \|\xi\|_{L^1(\manifold)} 
= \|g-\gdag\|_{L^1(\mi)} + \|g-\gdag \|_{L^1(\mc)} + \|\xi \|_{L^1(\manifold)}\nonumber\\
\leq& \left( \|g-\gobs\|_{L^1(\mi)} + \|\xi\|_{L^1(\mi)} \right) + \|g-\gdag\|_{L^1(\mc)} + \|\xi\|_{L^1(\manifold)}\nonumber\\
\leq& \|g-\gobs\|_{L^1(\manifold)} + 2 \|\xi\|_{L^1(\manifold\setminus \mc)} +2 \| g-\gdag\|_{L^1(\mc)}\nonumber\\
\leq& \| g-\gobs \|_{L^1(\manifold)} + 2\epsilon + 2 |\mc| \| g-\gdag\|_{L^\infty\left(\manifold\right)}.\label{Inequality8}
\end{align}
For $g = F(\ualdel)$ we will now apply the interpolation inequality \eqref{condition2} with $\delta:= \frac{2b}{b-1}\eta$. Given $b>1$ we set $\eta_0:= \frac{b-1}{2b} \delta_0$ and obtain for any $0<\eta \leq \eta_0$ that
\begin{multline}\label{Inequality15}
\frac{1}{b}  \|F(\ualdel)-\gdag\|_{L^1(\manifold)} + \|\xi\|_{L^1(\manifold)} \\\leq \|F(\ualdel)-\gobs\|_{L^1(\manifold)} + 2 \left(\epsilon + \eta \gamma\left(\frac{2b}{b-1}\eta\right)\breg{\ualdel}^{\frac1q}\right).
\end{multline}
Now Assumption \ref{ass:vsc} implies
\begin{align}
&\beta \breg{\ualdel} \nonumber\\
\stackrel{\eqref{eq:vsc}}{\quad\leq\quad}& \R(\ualdel) - \R(\udag) + \varphi \left(\|F(\ualdel) - F(\udag)\|_{L^1(\manifold)}\right)\nonumber\\
\stackrel{\eqref{Inequality11}}{\quad\leq\quad}& \frac{1}{\alpha} \|\xi\|_{L^1(\manifold)} - \frac{1}{\alpha}\|F(\ualdel)-\gobs\|_{L^1(\manifold)}+ \varphi \left(\|F(\ualdel) - F(\udag)\|_{L^1(\manifold)}\right) \nonumber\\
\stackrel{\eqref{Inequality15}}{\quad\leq\quad}& \frac{2}{\alpha} \left( \epsilon + \eta \gamma\left(\frac{2b}{b-1}\eta\right)\breg{\ualdel}^{\frac1q}\right) - \frac{1}{b\alpha} \|F(\ualdel)-\gdag\|_{L^1(\manifold)}\nonumber\\
&\qquad\qquad\qquad\qquad + \varphi \left(\|F(\ualdel) - F(\udag)\|_{L^1(\manifold)}\right)\label{estimates} \\
\quad\leq\quad&
\frac{2}{\alpha} \epsilon + \frac{2\eta}{\alpha} \gamma\left(\frac{2b}{b-1}\eta\right)\breg{\ualdel}^{\frac{1}{q}}+ \psi(b\alpha) \nonumber.
\end{align}
The second term on the right-hand side can be handled by Young's inequality $xy \leq x^{q'}/q' + y^q/q$ with $x= \beta^{-1/q}\frac{2\eta}{\alpha} \gamma\left(\frac{2b}{b-1}\eta\right)$, which implies that \eqref{Inequality4} holds true.

Since $\breg{\ualdel}\geq 0$, we derive from \eqref{estimates} and \eqref{Inequality4} in a similar fashion that
\begin{align*}
& \frac{1}{2b\alpha}\|F(\ualdel)-\gdag\|_{L^1(\manifold)}\\
\stackrel{\eqref{estimates}}{\quad\leq\quad}& \frac{2}{\alpha} \left( \epsilon + \eta \gamma\left(\frac{2b}{b-1}\eta\right)\breg{\ualdel}^{\frac1q}\right) - \frac{1}{2b\alpha} \|F(\ualdel)-\gdag\|_{L^1(\manifold)}\\
&\qquad\qquad\qquad\qquad + \varphi \left(\|F(\ualdel) - F(\udag)\|_{L^1(\manifold)}\right) \\
\quad\leq\quad&\frac{2}{\alpha}\epsilon + \frac{2}{\alpha}\eta \gamma\left(\frac{2b}{b-1}\eta\right)\breg{\ualdel}^{\frac{1}{q}} + \psi(2b\alpha)\\
\quad\leq\quad& \frac{2}{\alpha}\epsilon + \frac{1}{q'\beta^{q'-1}} \left( \frac{2}{\alpha} \eta \gamma\left(\frac{2b}{b-1}\eta\right)\right)^{q'} + \frac{1}{q} \beta \breg{\ualdel} + \psi(2b\alpha)\\
\stackrel{\eqref{Inequality4}}{\quad\leq\quad}& 
\frac{2q'}{\alpha}\epsilon + \beta^{1-q'}\left( \frac{2}{\alpha} \eta \gamma\left(\frac{2b}{b-1}\eta\right)\right)^{q'} + q' \psi(2b\alpha),
\end{align*}
where we used that $\psi$ is monotonically increasing. This immediately proves \eqref{Inequality5}.
\end{proof}

\begin{rem}
The optimal $\alpha$ can be obtained by balancing out the terms, cf. \cite[Thm. 2.3, 3.]{hw14}. For simplicity we will do this only in the specific examples in \S~\ref{sec:ex}.\\
Note that the phenomenon of \textit{exact penalization} (cf. \cite{bo04}) will occur if $\varphi \left(x\right) = c\cdot x$ with some $c>0$, as then $\psi\left(\alpha\right) = 0$ if $\alpha \leq 1/c$ and $\psi\left(\alpha\right) = \infty$ otherwise. Consequently, in this situation the optimal parameter is $\alpha = 1/c$, and for noise-free data this already yields an exact reconstruction.
\end{rem}

\begin{rem}\label{rem:pc}
All choices of $\alpha$ for which the right-hand side of \eqref{Inequality4} 
tends to $0$ (e.g.\ a-priori choices which approximately minimize this 
right-hand side) satisfy the condition \eqref{eq:PC_condition}.
\end{rem}

\section{Spaces of analytic functions and $L^\infty$-$L^1$ interpolation inequalities}
Let $\A \left(\manifold\right)$ denote the set of all analytic functions on $\manifold$. 
In this section we will introduce spaces $\A^{\lambda}(\manifold) \subsetneq \A \left(\manifold\right) $ of 
analytic functions on the manifolds $\manifold=\Rset/2\pi\Zset$, $(-1,1)$, and $\SS^2$  
and establish interpolation inequalities for spaces 
$L^\infty(\manifold)\subset L^1(\manifold)\subset \A^{\lambda}(\manifold)$. 

\subsubsection*{General strategy}
We will introduce classes of analytic functions $\A^{\lambda}(\manifold)$ which are indexed 
by an increasing function $\lambda:[0,\infty)\to  \Rset\cup\{\infty\}$. $\lambda$ will be 
a bound on the growth rate of analytic extensions of functions $g\in \A^{\lambda}(\manifold)$. Moreover, for 
each choice of $\manifold$ we will introduce a sequence of finite dimensional 
subspaces $\calU_m\subset \A^{\lambda}(\manifold)$, $m\in\Nset$ 
with corresponding orthogonal projections 
$P_m:L^2(\manifold)\to \calU_m$ and prove exponential bounds of the approximation error 
of the form 
\begin{equation}\label{eq:approx_bound}
\|g-P_mg\|_{L^{\infty}(\manifold)}\leq q_{\lambda}(m)
\exp(-\lambda^*(m))\|g\|_{\A^{\lambda}(\manifold)}
\end{equation}
for $m\geq m_0$ and $g\in \A^{\lambda}(\manifold)$ 
with the Fenchel-conjugate $\lambda^*(s):=\sup_{x\geq 0}[rs-\lambda(r)]$ of the function 
$\lambda$ (extended by $\infty$ on $(-\infty,0)$) and a polynomial $q_{\lambda}$. 
As a second ingredient we will need that the projection operators $P_m$ 
can be extended to $P_m: L^1(\manifold) \to L^{\infty}(\manifold)$ and satisfy 
the operator norm bound
\begin{equation}\label{eq:Nikolskii}
\|P_mg\|_{L^\infty(\manifold)}\leq \kappa_m \|g\|_{L^1(\manifold)}\qquad \mbox{for all }
g\in L^1(\manifold). 
\end{equation}
This is the case if $P_m$ has an integral representation  
$(P_mg)(x)=\int_{\manifold}K_m(x,y)g(y)\,\mathrm{d}y$ and 
$\|K_m\|_{L^{\infty}(\manifold\times\manifold)}=\kappa_m$. The kernel $K_m$ is given 
by $K_m(x,y)= \sum_{j=1}^{N_m} g_j(x)g_j(y)$ for any orthonormal basis 
$\{g_1,\dots,g_{N_m}\}\subset \calU_m$. 
With the help of \eqref{eq:approx_bound} and \eqref{eq:Nikolskii} we obtain
\begin{align*}
\|g\|_{L^{\infty}(\manifold)}
&\leq \|g-P_mg\|_{L^{\infty}(\manifold)}+ \|P_mg\|_{L^{\infty}(\manifold)}\\
&\leq q_{\lambda}(m)\exp(-\lambda^*(m))\|g\|_{\A^{\lambda}(\manifold)} 
+ \kappa_m\|g\|_{L^1(\manifold)}
\end{align*}
for $m\geq m_0$ and $g\in \A^{\lambda}(\manifold)$. 
We then choose $m(\delta)$ (decreasing in $\delta>0$) 
such that $\kappa_{m(\delta)}\leq \delta^{-1}$. This yields the interpolation inequality
\begin{align}\label{eq:general_interp_ieq}
\begin{aligned}
\|g\|_{L^{\infty}(\manifold)} 
&\leq \gamma(\delta)\|g\|_{\A^{\lambda}(\manifold)} 
+ \frac{1}{\delta}\|g\|_{L^1(\manifold)}\quad\mbox{with}\\
\gamma(\delta)&:=  q_{\lambda}(m(\delta))\exp(-\lambda^*(m(\delta)))
\end{aligned}
\end{align}
for all $g\in \A^{\lambda}(\manifold)$ and $\delta\leq\delta_0$ if $m(\delta_0)=m_0$ 
(see \eqref{condition2}).

\subsection{Case $\manifold=\Rset/2\pi\Zset$} 
Note that any $2\pi$-periodic real-analytic function $g: \Rset/ 2\pi\Zset \to \Rset$ can be 
extended to a holomorphic function $\tilde g$ on some strip $S_B = \left\{z \in \Cset ~\big|~ \left|\Im\left(z\right)\right| < B\right\}$ with $B>0$. We may measure the smoothness of $g$ in terms of $B$ and the growth of $\left|\tilde g\left(x+\textup{i}y\right)\right|$ as $\left|y\right| \nearrow B$. This leads to the following definition (cf.\ \cite[Sec. 11]{kress1999linear} for the special case 
that $\lambda$ is an indicator function):
\begin{Def}
Let $\lambda : \left[0,\infty\right) \to \Rset \cup \left\{\infty\right\}$ be an increasing weight function with positive $B_\lambda := \sup\left\{ r \in \left[0,\infty\right) ~\big|~\lambda\left(r\right) < \infty\right\}$. With the holomorphic 
extension $\tilde{g}$ of $g$ on $B_\lambda$ introduced above 
we define the space $\A^\lambda\left(\Rset / 2 \pi \Zset\right)$ by
\begin{align*}
\A^\lambda\left(\Rset / 2 \pi \Zset\right) &:= \left\{g \in C\left(\Rset / 2 \pi \Zset\right) ~\big|~ \tilde g\text{ exists on }S_{B_\lambda},\quad \left\Vert g\right\Vert_{\A^\lambda\left(\Rset / 2 \pi \Zset\right)} < \infty\right\},\\
\left\Vert g\right\Vert_{\A^{\lambda}\left(\Rset / 2 \pi \Zset\right)}&:= \sup\limits_{z \in S_{B_\lambda}}\left[\exp\left(-\lambda\left(\left|\Im\left(z\right)\right|\right)\right)\left|\tilde g \left(z\right)\right|\right].
\end{align*}
\end{Def}
\begin{thm}\label{thm:periodic}
Let $\lambda : \left[0,\infty\right) \to \Rset \cup \left\{\infty\right\}$ be non-decreasing 
with $B_{\lambda}>0$. 
\begin{enumerate}
\item The space $\A^\lambda\left(\Rset / 2 \pi \Zset\right)$ equipped with $\left\Vert \cdot \right\Vert_{\A^{\lambda}\left(\Rset / 2 \pi \Zset\right)}$ is a Banach space.
\item The Fourier coefficients $\hat g \left(n\right)$ of $g \in \A^\lambda\left(\Rset / 2 \pi \Zset\right)$ satisfy
\begin{equation}\label{eq:periodic_fourier_est}
\left| \hat g \left(n\right) \right| \leq \exp\left(-\lambda^* \left(\left|n\right|\right) \right) \left\Vert g \right\Vert_{\A^{\lambda}\left(\Rset / 2 \pi \Zset\right)}, \qquad n \in \Zset.
\end{equation}
\item If $P_m$ denotes the $L^2$-orthogonal projection onto $\mathrm{span} \left\{ \exp\left( \textup{i} n \cdot\right)~\big|~ \left|n\right| \leq m\right\}$, then there exists $m_0 \in \Nset$ and a constant $c_{\lambda}>0$ such that
\begin{equation}\label{eq:periodic_approx_1}
\left\Vert \left(I-P_m\right)g\right\Vert_{L^\infty\left(-\pi,\pi\right)}\leq 2 c_{\lambda} \exp\left(-\lambda^*\left(m\right) \right) \left\Vert g \right\Vert_{\A^{\lambda}\left(\Rset / 2 \pi \Zset\right)}
\end{equation}
for all $g \in \A^\lambda\left(\Rset / 2 \pi \Zset\right)$ and $m \geq m_0$.
\item There exists $\delta_0>0$ such that the interpolation inequality \eqref{eq:general_interp_ieq} 
holds true with $\manifold =\Rset/2\pi\Zset$ and  
\[
\gamma(\delta) =  c_{\lambda}
\exp \left( -\lambda^* \left( \left\lfloor \frac{\pi}{\delta} - \frac{1}{2}\right\rfloor\right)\right)
\]
for all $g\in\A^{\lambda}(\Rset/2\pi\Zset)$ 
and all $0<\delta\leq \delta_0$. Here $\lfloor x \rfloor:=\sup\{n\in\Zset:n\leq x\}$ for $x\in\Rset$.
\end{enumerate}
\end{thm}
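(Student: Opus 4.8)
The plan is to prove the four assertions in turn; parts (1)--(3) carry the substance, and part (4) follows by inserting (3) into the general strategy described above. For (1), the norm axioms are immediate since $\|\cdot\|_{\A^{\lambda}\left(\Rset / 2 \pi \Zset\right)}$ is a weighted supremum. For completeness I would take a Cauchy sequence $(g_k)$ and note that on every compact $K\subset S_{B_{\lambda}}$ the weight $\exp(-\lambda(|\Im z|))$ is bounded below by $\exp(-\lambda(\sup_{z\in K}|\Im z|))>0$ (finite because $\sup_K|\Im z|<B_\lambda$); hence $(\tilde g_k)$ is uniformly Cauchy on $K$, the limit is holomorphic on $S_{B_\lambda}$ by Weierstrass' convergence theorem and $2\pi$-periodic, its restriction $g$ to $\Rset$ is continuous, and a routine $\varepsilon/2$ argument shows $g\in\A^\lambda\left(\Rset / 2 \pi \Zset\right)$ with $g_k\to g$ in this space.

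For (2), given $n>0$ and $0<t<B_\lambda$ I would shift the contour in $\hat g(n)=\frac{1}{2\pi}\int_{-\pi}^{\pi}g(x)e^{-\mathrm{i}nx}\D x$ from $\Rset$ to $\Rset-\mathrm{i}t$ inside the strip; the two vertical segments cancel by the $2\pi$-periodicity of $\tilde g(z)e^{-\mathrm{i}nz}$, so $|\hat g(n)|\le e^{-nt}\sup_x|\tilde g(x-\mathrm{i}t)|\le e^{\lambda(t)-nt}\|g\|_{\A^\lambda\left(\Rset / 2 \pi \Zset\right)}$. Taking the infimum over $t\in[0,B_\lambda)$ and using $\sup_{t\in[0,B_\lambda)}(nt-\lambda(t))=\lambda^*(n)$ (which holds because $\lambda=\infty$ beyond $B_\lambda$) gives \eqref{eq:periodic_fourier_est}; the case $n<0$ is symmetric (shift upwards) and $n=0$ is trivial.

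For (3), \eqref{eq:periodic_fourier_est} makes the Fourier series converge absolutely, so $\|(I-P_m)g\|_{L^\infty\left(-\pi,\pi\right)}\le\sum_{|n|>m}|\hat g(n)|\le 2\|g\|_{\A^\lambda\left(\Rset / 2 \pi \Zset\right)}\sum_{n>m}e^{-\lambda^*(n)}$, and the remaining task is to bound the tail sum by a constant times $e^{-\lambda^*(m)}$ for $m\ge m_0$. I expect this to be the main obstacle, being the only step that needs quantitative information about the abstract weight. The key observations are that $\lambda^*$, as a supremum of the affine maps $s\mapsto rs-\lambda(r)$, is convex and non-decreasing, and that its asymptotic slope is $\lim_{s\to\infty}\lambda^*(s)/s=B_\lambda>0$ (bounded below by letting $r\nearrow B_\lambda$, and above by $\lambda^*(s)\le B_\lambda s-\lambda(0)$). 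Hence the right derivative $(\lambda^*)'_+$ increases to $B_\lambda$, so there is $m_0$ with $(\lambda^*)'_+(m_0)\ge B_\lambda/2$; convexity then yields $\lambda^*(m+k)-\lambda^*(m)\ge\frac{B_\lambda}{2}k$ for all $m\ge m_0$, $k\ge1$, and therefore $\sum_{n>m}e^{-\lambda^*(n)}\le(e^{B_\lambda/2}-1)^{-1}e^{-\lambda^*(m)}$, which gives \eqref{eq:periodic_approx_1} with $c_\lambda:=(e^{B_\lambda/2}-1)^{-1}$. If $B_\lambda=\infty$ the same argument applies with any fixed positive slope in place of $B_\lambda/2$.

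Finally, for (4) I would apply the general strategy with $\Z=\A^\lambda\left(\Rset / 2 \pi \Zset\right)$: the projection $P_m$ is convolution with $\frac{1}{2\pi}$ times the Dirichlet kernel $D_m$, so its integral kernel obeys $\|K_m\|_{L^\infty}=\frac{1}{2\pi}\|D_m\|_{L^\infty}=\frac{2m+1}{2\pi}=:\kappa_m$, which is \eqref{eq:Nikolskii}. Choosing $m(\delta):=\lfloor\pi/\delta-1/2\rfloor$ makes $\kappa_{m(\delta)}\le1/\delta$, and $m(\delta)\ge m_0$ for all $\delta\le\delta_0$ once $\delta_0$ is picked with $m(\delta_0)=m_0$. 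Combining $\|g\|_{L^\infty}\le\|(I-P_{m(\delta)})g\|_{L^\infty}+\|P_{m(\delta)}g\|_{L^\infty}$ with \eqref{eq:periodic_approx_1} and \eqref{eq:Nikolskii} then gives \eqref{eq:general_interp_ieq}, i.e.\ \eqref{condition2}, with $\gamma(\delta)=2c_\lambda\exp(-\lambda^*(\lfloor\pi/\delta-1/2\rfloor))$, which is the asserted expression after absorbing the factor $2$ into $c_\lambda$.
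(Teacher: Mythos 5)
Your proposal is correct and follows essentially the same route as the paper: the only cosmetic difference is in part (2), where you shift the contour of the Fourier integral directly instead of passing to the annulus $\{e^{-R}<|w|<e^{R}\}$ via $w=e^{\textup{i}z}$ and reading off Laurent coefficients, but this yields the identical bound $|\hat g(n)|\leq e^{\lambda(r)-r|n|}\|g\|_{\A^{\lambda}\left(\Rset/2\pi\Zset\right)}$ to be optimized in $r$. Parts (1), (3) (convexity and monotonicity of $\lambda^*$ giving a uniform increment $a>0$ beyond some $m_0$ and hence a geometric tail) and (4) (Dirichlet kernel with $\kappa_m=\frac{2m+1}{2\pi}$ and $m(\delta)=\lfloor\pi/\delta-1/2\rfloor$) coincide with the paper's argument, including your correct observation that the factor $2$ from \eqref{eq:periodic_approx_1} must be absorbed into $c_{\lambda}$ to match the stated $\gamma(\delta)$.
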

\begin{proof}
\emph{Part 1:} Obviously, $\A^{\lambda}\left(\Rset / 2 \pi \Zset\right)$ 
is a normed space. Since any Cauchy 
sequence in this spaces converges uniformly on compact subsets of $B_\lambda$, 
the pointwise limit is again holomorphic, so it belongs to 
$\A^\lambda\left(\Rset / 2 \pi \Zset\right)$. This follows from the Cauchy integral formula and proves completeness. \\
\emph{Part 2:} Let $g \in \A^\lambda\left(\Rset / 2 \pi \Zset\right)$ and denote by $\tilde{g}$ the holomorphic extension of $g$ to $S_{B_\lambda}$. Since $\tilde{g}$ and $\tilde{g}(2 \pi + \cdot)$ coincide on $\Rset$ by the periodicity of $g$, it follows from the identity principle that $\tilde{g}$ and $\tilde{g}(2 \pi + \cdot)$ also coincide on $S_{B_{\lambda}}$, i.e.\  $\tilde{g}$ is $2 \pi-$periodic.

For $0 < R < B_\lambda$ consider the annulus $K_R:= \{ w \in \Cset ~\big \vert~ \exp(-R) < |w| < \exp(R)\}$. The map $\theta: S_R \rightarrow K_R, \theta(z):= \exp\left(\textup{i}z\right)$ is holomorphic for any $0 < R \leq B_\lambda$. The restriction of $\theta$ to the rectangle $D_R:= \{z \in S_R ~\big|~ \Re\left(z\right) \in (-\pi, \pi]\}$ is bijective and $\theta\left(-\pi +\textup{i}y\right) = \theta\left(\pi +\textup{i}y\right) = -\exp(-y).$ Hence $h:= \tilde{g} \circ \theta^{-1}: K_{B_\lambda} \rightarrow \Cset$ is continuous on the line segment $\{ -\exp(-y)~\big|~ -R <y<R \}$ and therefore holomorphic in the annulus $K_{B_\lambda}.$ If follows that for all $w = \exp\left(\textup{i}x\right) \in \SS^1, x \in (-\pi, \pi]$ we have 
\begin{equation}\label{eq:aux5}
h(w) = g \circ \theta^{-1}(w) = g(x)= \sum\limits_{n \in \Zset} \hat{g}(n) \exp\left(\textup{i} n x\right)= \sum\limits_{n \in \Zset} \hat{g}(n) w^n.
\end{equation}
Being holomorphic in the annulus $K_{B_\lambda}$, $h$ has a Laurent series expansion which by the identity principle is uniquely determined by the series expansion \eqref{eq:aux5} on $\SS^1$, i.e.\ 
$h(w) = \sum_{n \in \Zset} \hat{g}(n) w^n$ for all $w \in K_{B_\lambda}$.
For any $s \in \Rset$ with $\exp\left(-B_\lambda\right) < s< \exp\left(B_\lambda \right)$ we have
\[
|\hat{g}(n)|  = \bigg|\frac{1}{2\pi i} \int_{|w|=s} h(w) \frac{\,\mathrm dw}{w^{n+1}}\bigg| \leq 
\max\limits_{|w|=s} \left|h(w)\right| \cdot \frac{1}{s^n}.
\]
But $\max_{\left|w\right| = s}\left|h(w)\right|=  \max_{\Im\left(z\right) =  \ln\left(1/s\right)}\left|\tilde g\left(z\right)\right|$, and hence with $r:= \ln\left(1/s\right)$ this implies
\begin{align*}
\left|\hat{g}(n)\right| 
&\leq \max\limits_{\Im\left(z\right) = \pm r} \left|\tilde{g}(z)\right| \cdot \exp(-r\left|n\right|)
\leq \left\Vert g\right\Vert_{\A^{\lambda}(\Rset / 2 \pi \Zset)} \cdot \exp \left( -\left[r\left|n\right| - \lambda(r)\right] \right)
\end{align*}
for all $0 < r < B_\lambda$. Optimizing in $r$ proves \eqref{eq:periodic_fourier_est}.

\emph{Part 3:} The orthogonal projection is given by $P_m g(x)= \sum_{|n| \leq m} \hat{g}(n) \exp\left(\textup{i}nx\right)$, $x \in (-\pi, \pi)$. Using \eqref{eq:periodic_fourier_est} we obtain 
\begin{align*}
\|g-P_m g\|_{L^{\!\infty}\!\left(-\pi,\pi\right)} 
&= \!\!\!\!\!\sup\limits_{x \in (-\pi, \pi)} \bigg|\!\!\sum\limits_{|n|>m} \!\!\hat{g}(n) \exp\left(\textup{i}nx\right)\!\bigg| 
\leq 2 \left\Vert g\right\Vert_{\A^{\lambda}(\Rset / 2 \pi \Zset)} \!\!\!\sum^{\infty}_{n=m+1} 
\!\!\!\!\exp\left(-\lambda^*(n)\right)).
\end{align*}
Recall that $\lambda^*$ is always convex. As $\lambda(r)=\infty$ for $r<0$, $\lambda^*(s)=\sup_{r\geq 0}
[sr-\lambda(r)]$ is non-decreasing, and since $B_\lambda>0$ it is easy to see that 
there exists $m_0 \in \Nset$ with $\lambda^*(m_0)>0$ and $a:= \lambda^*(m_0+1)- \lambda^*(m_0) >0$. 
Thus the convexity of $\lambda^*$ implies
$a \leq \lambda^*(m+1)-\lambda^*(m)$ for all $m \geq m_0$. Consequently, for any $n > m \geq m_0$, we have
$\lambda^*(n) -\lambda^*(m) = \sum_{j=m}^{n-1} \left( \lambda^*(j+1)-\lambda^*(j)\right) \geq (n-m)a$.
Therefore, 
\begin{align*}
\sum_{n=m+1}^{\infty} \exp\left(- \left[\lambda^*(n) - \lambda^*(m)\right]\right)
\leq \sum_{n=m+1}^{\infty} \exp(-a(n-m))= 
\frac{1}{\exp(a)-1} 
\end{align*}
whenever $m \geq m_0$.  This shows \eqref{eq:periodic_approx_1} with 
$c_{\lambda}:=2/(\exp(a)-1)$. 

\emph{Part 4:} We follow our general strategy with 
$K_m(x,y):= \frac{1}{2\pi}\frac{\sin((m+1/2)(x-y)}{\sin((x-y)/2)}$ 
(Dirichlet kernel), $\kappa_m:=\frac{2m+1}{2\pi}$,
and $m(\delta)= \left\lfloor \frac{\pi}{\delta} - \frac{1}{2}\right\rfloor$. 
\end{proof}

\subsection{Case $\manifold = (-1,1)$}
The main idea is to extend a real-analytic function $g$ on $\left(-1,1\right)$ to a holomorphic function $\tilde g$ on an ellipse with foci $\{ -1,1\}$ of the form
\begin{align*}
\E_r= \left\{ x+\textup{i}y \in \Cset ~\Bigg|~ x,y \in \Rset, \frac{x^2}{\cosh\left(r\right)^2} 
+ \frac{y^2}{\sinh\left(r\right)^2} \leq 1 \right\}, \qquad r >0.
\end{align*} 

\begin{Def}
Let $\lambda : \left[0,\infty\right) \to \Rset \cup \left\{\infty\right\}$ be an increasing weight function with positive $B_\lambda := \sup\left\{ r \in \left[0,\infty\right) ~\big|~\lambda\left(r\right) < \infty\right\}$. We define the space $\A^\lambda\left(-1,1\right)$ by
\begin{align*}
\A^\lambda\left(-1,1\right)&:= \left\{g \in \A\left(-1,1\right) 
~\big|~ \tilde g\text{ exists on }\E_{B_\lambda},\quad \left\Vert g\right\Vert_{\A^\lambda\left(-1,1\right)} < \infty\right\},\\
\left\Vert g\right\Vert_{\A^{\lambda}\left(-1,1\right)}&
:= \sup\limits_{0 < t < B_\lambda} \left[\exp\left(-\lambda\left(t\right)\right) 
\sup_{z \in \partial \E_{t}}\left|\tilde g \left(z\right)\right|\right].
\end{align*}
\end{Def}

\begin{thm}\label{thm:interval}
Let $\lambda : \left[0,\infty\right) \to \Rset \cup \left\{\infty\right\}$ be non-decreasing 
with $B_{\lambda}>0$. 
\begin{enumerate}
\item The space $\A^\lambda\left(-1,1\right)$ equipped with the norm 
$\left\Vert \cdot \right\Vert_{\A^{\lambda}\left(-1,1\right)}$ is a Banach space.
\item The coefficients $a_n(g)$ of $g \in \A^\lambda\left(-1,1\right)$ with respect to 
the Chebychev polynomials satisfy
\begin{equation}\label{eq:non-periodic_fourier_est}
\left| a_n(g) \right| \leq 2\exp\left(-\lambda^*\left(\left|n\right|\right) \right) \left\Vert g\right\Vert_{\A^{\lambda}\left(-1,1\right)}, \qquad n \in \Nset.
\end{equation}
\item If $P_m$ denotes the $L^2$-orthogonal projection onto the space of polynomials of 
degree $\leq m$, then there exists $m_0 \in \Nset$ and 
a constant $c_{\lambda}>0$ such that
\begin{equation}\label{eq:non-periodic_approx_1}
\left\Vert \left(I-P_m\right)g\right\Vert_{L^\infty\left(-1,1\right)} 
\leq c_{\lambda} \exp\left(-\lambda^*\left(m\right) \right) \left\Vert g\right\Vert_{\A^{\lambda}\left(-1,1\right)}
\end{equation}
for all $g \in \A^\lambda\left(-1,1\right)$ and $m \geq m_0$.
\item There exists $\delta_0>0$ such that the interpolation inequality \eqref{eq:general_interp_ieq} 
holds true with $\manifold =(-1,1)$ and  
\begin{equation}\label{eq:interp_ineq_interval}
\gamma(\delta) = c_{\lambda}
\exp \left(-\lambda^*\left( \left\lfloor \sqrt{\frac{2}{\delta}} \right\rfloor-1\right)\right)
\end{equation}
for all $g\in\A^{\lambda}(-1,1)$ and all $\delta\leq \delta_0$. 
\end{enumerate}
\end{thm}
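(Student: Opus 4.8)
The plan is to follow the same four-step pattern used in the proof of Theorem \ref{thm:periodic}, transporting everything from the strip geometry to the ellipse geometry via the Joukowski map $z \mapsto \frac{1}{2}(z+z^{-1})$, which conformally sends the annulus $K_r$ (after the substitution $w = \mathrm{e}^{\mathrm{i}\zeta}$ of the periodic case) onto the ellipse $\E_r$; equivalently, $x = \cos\theta$ identifies Chebyshev polynomials $T_n(\cos\theta) = \cos(n\theta)$ with the Fourier basis. Part 1 (completeness) is verbatim the same argument: a Cauchy sequence in $\A^\lambda(-1,1)$ converges uniformly on compact subsets of $\E_{B_\lambda}$, the limit is holomorphic by the Cauchy integral formula, and the sup over the nested ellipses of $\exp(-\lambda(t))\sup_{\partial\E_t}|\tilde g|$ is controlled in the limit, so the limit lies in $\A^\lambda(-1,1)$.

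For Part 2, the Chebyshev coefficients are $a_n(g) = \frac{2}{\pi}\int_{-1}^1 g(x) T_n(x)(1-x^2)^{-1/2}\,\mathrm dx$, and via the substitution $x = \cos\theta$ this equals $\frac{2}{\pi}\int_{-\pi}^\pi g(\cos\theta)\cos(n\theta)\,\mathrm d\theta \cdot \tfrac12$, i.e.\ it is (up to a factor $2$) the $n$-th Fourier coefficient of the even $2\pi$-periodic function $\theta\mapsto g(\cos\theta)$. That function extends holomorphically to the strip $S_{B_\lambda}$ precisely because $g$ extends to $\E_{B_\lambda}$ (the Joukowski map sends $S_t$ onto $\E_t$), and on $\{\Im\zeta = \pm t\}$ one has $|g(\cos\zeta)| \le \sup_{\partial\E_t}|\tilde g|$ because $\cos(\Rset + \mathrm i t)$ traces $\partial\E_t$. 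Applying the Cauchy estimate on $|w| = \mathrm{e}^{-r}$ in the $w$-plane exactly as in Part 2 of Theorem \ref{thm:periodic} and optimizing over $0 < r < B_\lambda$ yields $|a_n(g)| \le 2\exp(-\lambda^*(|n|))\|g\|_{\A^\lambda(-1,1)}$, which is \eqref{eq:non-periodic_fourier_est}; the extra factor $2$ (versus the periodic case) comes from the normalization of the Chebyshev expansion.

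Part 3 is then the telescoping-sum estimate from Part 3 of Theorem \ref{thm:periodic}: since $\|T_n\|_{L^\infty(-1,1)} = 1$, we get $\|(I-P_m)g\|_{L^\infty(-1,1)} \le \sum_{n > m}|a_n(g)| \le 2\|g\|_{\A^\lambda(-1,1)}\sum_{n>m}\exp(-\lambda^*(n))$, and convexity of $\lambda^*$ together with $B_\lambda > 0$ gives $m_0$ and $a := \lambda^*(m_0+1) - \lambda^*(m_0) > 0$ with $\lambda^*(n) - \lambda^*(m) \ge (n-m)a$ for $n > m \ge m_0$, so the tail sums to $2\exp(-\lambda^*(m))/(\mathrm e^a - 1)$, giving \eqref{eq:non-periodic_approx_1} with $c_\lambda := 2/(\mathrm e^a - 1)$. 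For Part 4 we invoke the general strategy \eqref{eq:general_interp_ieq}: the reproducing kernel of the degree-$\le m$ polynomials in $L^2((-1,1),(1-x^2)^{-1/2}\mathrm dx)$ is the Christoffel–Darboux kernel, whose sup-norm behaves like $\kappa_m \asymp m^2$ (the Nikolskii constant for degree-$m$ polynomials on an interval), so choosing $m(\delta) \asymp \lfloor\sqrt{2/\delta}\rfloor - 1$ arranges $\kappa_{m(\delta)} \le 1/\delta$ and plugging into \eqref{eq:approx_bound}–\eqref{eq:general_interp_ieq} yields $\gamma(\delta) = c_\lambda\exp(-\lambda^*(\lfloor\sqrt{2/\delta}\rfloor - 1))$.

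The step I expect to be the main obstacle is the clean verification that the holomorphic extension of $g$ to $\E_{B_\lambda}$ corresponds exactly, under $x = \cos\zeta$, to a holomorphic extension of $\theta \mapsto g(\cos\theta)$ to the strip $S_{B_\lambda}$ with the matching growth bound $|g(\cos\zeta)| \le \sup_{\partial\E_{|\Im\zeta|}}|\tilde g|$ — one must check that $\cos$ maps the boundary line $\Im\zeta = t$ onto $\partial\E_t$ (using $\cos(u+\mathrm i t) = \cos u\cosh t - \mathrm i\sin u\sinh t$, which parametrizes the ellipse with semi-axes $\cosh t$, $\sinh t$) and that no extra factor is lost at the branch points $\pm 1$; the only other mild technicality is pinning down the exact constant in $\kappa_m \asymp m^2$ so that the floor expression in \eqref{eq:interp_ineq_interval} comes out as stated, but that is a bookkeeping matter rather than a conceptual one.
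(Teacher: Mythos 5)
Parts 1--3 of your proposal match the paper's argument essentially verbatim: the paper also passes through the Joukowski map $\theta(w)=\tfrac12(w+w^{-1})$ from the annulus $K_{B_\lambda}$ onto $\E_{B_\lambda}$ (your $x=\cos\zeta$ picture is the same thing after $w=e^{\mathrm{i}\zeta}$), reads off the Chebychev coefficients as Laurent coefficients of $h=2\tilde g\circ\theta$, applies the Cauchy estimate on $|w|=e^{r}$ and optimizes in $r$ to get \eqref{eq:non-periodic_fourier_est}, and then sums the tail exactly as in Theorem~\ref{thm:periodic} using the convexity of $\lambda^*$. Your identification of the boundary line $\Im\zeta=t$ with $\partial\E_t$ via $\cos(u+\mathrm{i}t)=\cos u\cosh t-\mathrm{i}\sin u\sinh t$ is the correct way to see that no norm is lost, so the ``main obstacle'' you flag is not an obstacle.

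The genuine problem is in your Part 4. You propose to take $\kappa_m$ as the sup norm of the Christoffel--Darboux kernel for polynomials of degree $\leq m$ in $L^2\bigl((-1,1),(1-x^2)^{-1/2}\,\mathrm{d}x\bigr)$. That kernel reproduces against the \emph{weighted} measure: $(P_mg)(x)=\int_{-1}^1K_m(x,y)g(y)(1-y^2)^{-1/2}\,\mathrm{d}y$, so bounding $\|K_m\|_{L^\infty}$ controls $\|P_mg\|_{L^\infty}$ only in terms of $\|g(1-\cdot^2)^{-1/2}\|_{L^1}$, which is not finite for general $g\in L^1(-1,1)$; hence \eqref{eq:Nikolskii} with the unweighted $L^1$-norm does not follow. (A symptom that something is off: since $|T_n|\leq 1$, the sup of the normalized Chebychev kernel is of order $m$, not $m^2$ --- the extra power of $m$ in the $L^1\to L^\infty$ Nikolskii constant comes precisely from the singularity of the weight, which your kernel bound cannot see.) The paper avoids this by taking $P_m$ to be the \emph{unweighted} $L^2(-1,1)$ projection, whose kernel $K_m(x,y)=\sum_{j=0}^m(j+\tfrac12)\Legr_j(x)\Legr_j(y)$ with Legendre polynomials satisfies $\|K_m\|_{L^\infty}\leq\sum_{j=0}^m(j+\tfrac12)=(m+1)^2/2=:\kappa_m$, and $\kappa_{m(\delta)}\leq\delta^{-1}$ forces exactly $m(\delta)=\bigl\lfloor\sqrt{2/\delta}\bigr\rfloor-1$, giving \eqref{eq:interp_ineq_interval}. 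If you switch to the unweighted projection you should also note (as the paper tacitly does) that the approximation bound of Part 3 is stated for the Chebychev partial sum, so either prove \eqref{eq:non-periodic_approx_1} for the Legendre projection too (e.g.\ via $\|g-P_mg\|_{L^\infty}\leq(1+2\kappa_m)\|g-S_mg\|_{L^\infty}$ with $S_m$ the Chebychev partial sum, which the polynomial prefactor $q_\lambda$ in \eqref{eq:approx_bound} absorbs), or otherwise reconcile the two projections.
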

\begin{proof}
\emph{Part 1:} This can be shown as in the proof of Theorem \ref{thm:periodic}. \\
\emph{Part 2:} The following approximation argument is taken from Kress \cite[Thm. 11.7]{kress1999linear}.  The function 
$\theta:K_{B_\lambda} \to \E_{B_\lambda}$, 
$\theta(w):= \frac{1}{2} \left( w+\frac{1}{w}\right)$ on an annulus 
$K_{B_\lambda}$ as defined in the proof of Theorem \ref{thm:periodic}
is surjective and holomorphic (but not injective). 

Now let $g \in \A^{\lambda}\left(-1,1\right)$ with holomorphic extension 
$\tilde{g} : \E_{B_\lambda} \rightarrow \Cset$. Define 
$h:K_{B_\lambda} \rightarrow \Cset$ by $h(w):= 2 \tilde{g} \circ \theta(w).$ 
The holomorphic function $h$ in the annulus $K_{B_\lambda}$ can be expanded into a Laurent series $h(w)= \sum_{n \in \Zset} a_n w^n, w \in K_{B_\lambda}$ with coefficients 
\begin{align*}
a_n = \frac{1}{\pi i} \int_{|w|=\exp(r)} \tilde{g} \left( \frac{1}{2} \left( w+ \frac{1}{w}\right)\right)\frac{\,\mathrm dw}{w^{n+1}},
\qquad -B_\lambda <r<B_\lambda,\; n \in \Zset.
\end{align*}
Substitution $\tilde{w}=\frac{1}{w}$ shows that $a_{-n}=a_n$. Therefore
\begin{align*}
h(w)=a_0 + \sum_{n\in \Nset} a_n \left( w^n + \frac{1}{w^n}\right), \qquad w \in K_{B_\lambda}.
\end{align*}
For $|w|=1$ we write $w= \exp\left(\textup{i}t\right)$ and obtain
\begin{align*}
\frac{1}{2}\left( w^n + \frac{1}{w^n}\right) = \cos(nt) = T_n(\cos(t)) = T_n \left( \frac{1}{2}\left( w+\frac{1}{w}\right)\right),
\end{align*}
where $T_n$ is the $n-$th Chebychev polynomial. For $z \in \E_{B_\lambda}$ we find $w \in K_{B_\lambda}$ with $z = \theta\left(w\right)$ to obtain
\begin{align*}
\tilde{g}(z)= \tilde{g} \circ \theta(w) = \frac{1}{2}h(w) = \frac{a_0}{2} + \sum\limits_{n \in \Nset} a_n T_n(z).
\end{align*}
The formula for $a_n$ yields for any $n \in \Nset$ and $0\leq r<B_\lambda$ that
\begin{equation}\label{eq:aux7}
|a_n| \leq \frac{2}{\exp(rn)} \max\limits_{z \in \partial \E_{r}} |\tilde{g}(z)| 
\leq 2 \exp\left(\lambda(r)-nr\right) \left\Vert g\right\Vert_{\A^{\lambda}\left(-1,1\right)}
\end{equation}
where we used the definition of $\left\Vert \cdot\right\Vert_{\A^{\lambda}\left(-1,1\right)}$. Optimizing in $r$ yields \eqref{eq:non-periodic_fourier_est}.

\emph{Part 3:} 
As  $P_m g \left(x\right) \!\!= \!\!a_0(g)/2 + 
\sum_{n=1}^{m} a_n(g) T_n(x)$ and $\|T_n\|_{L^{\infty}(-1,1)}=1$, we obtain
\begin{align}\label{eq:aux}
\begin{aligned}
\|g-P_m g\|_{L^\infty\left(-1,1\right)}  &= \sum_{n=m+1}^{\infty} |a_n(g)|
\leq 2 \sum_{n=m+1}^{\infty} \exp \left(-\lambda^*\left(n\right)\right) 
\left\Vert g\right\Vert_{\A^{\lambda}\left(-1,1\right)}.
\end{aligned}
\end{align}
The sum may be bounded as in the proof of Theorem \ref{thm:periodic}. 

\emph{Part 4:} We again follow our general strategy. A complete orthonormal system of 
$\calU_m=\mathrm{span}\{x^0,\cdots,x^m\}\subset L^2(-1,1)$ is given by 
$\{\sqrt{j+1/2}\Legr_j:j=0,\dots,m\}$ with the  Legendre polynomials $\Legr_j$. As 
$\|p_j\|_{L^{\infty}(-1,1)}=1$, the supremum of the kernel 
$K(x,y)=\sum_{j=0}^m(j+1/2)\Legr_j(x)\Legr_j(y)$ is bounded by 
$\kappa_m=\sum_{j=0}^m(j+1/2)=(m+1)^2/2$. Hence, we choose 
$m(\delta):= \lfloor \sqrt{2/\delta}\rfloor-1$. 
\end{proof}

\subsection{Case $\manifold= \SS^2$}
Let $\SS^2:=\{x\in\Rset^3:|x|_2=1\}$ denote the unit sphere. 
Recall that if $\mathcal{P}_m$ denotes the space of polynomials 
in $x=(x_1,x_2,x_3)\in\Rset^3$ of degree $\leq m$, then the space of 
\textit{spherical harmonics} $\mathcal{H}_m$ is defined by 
\begin{align*}
\mathcal{H}_m&:= \left\lbrace\left. p_{\vert \SS^2}\right| p \in \mathcal{P}_m ~\text{harmonic and}~ p ~\text{is homogeneous of degree}~m\right\rbrace.
\end{align*}
Here harmonic means $\Delta p=0$ and homogeneous of degree $m$ that $p(rx)=r^mp(x)$ for 
all $r>0$ and $x\in\Rset^3$. 
We have the following decompositions as orthogonal direct sums with respect to $\langle \cdot, \cdot \rangle_{L^2(\SS^2)}$ (see e.g.~\cite[Cpt. 4]{stein1971introduction}): 
\begin{align}\label{sat10}
L^2(\SS^2) = \bigoplus_{l=0}^{\infty} \mathcal{H}_l, \qquad 
\mathcal{P}_m|_{\SS^2} = \bigoplus_{l=0}^m \mathcal{H}_l
\end{align}
One has $\dim \mathcal{H}_m = 2m+1$ and $\dim \mathcal{P}_m|_{\SS^2} = \sum_{l=0}^m (2l+1) = (m+1)^2.$ 
The orthogonal projections of $L^2(\SS^2)$ onto $\mathcal{H}_m$ and $\mathcal{P}_m|_{\SS^2}$ 
will be denoted by $Q_m$ and $\SHProj_m$, respectively. 
Spherical harmonics are closely related to Legendre polynomials $\Legr_m$. Choose any orthonormal basis $\left( Y_j\right)_{j=1}^{2m+1}$ of $\mathcal{H}_m$ with respect to $\langle \cdot, \cdot \rangle_{L^2(\SS^2)}$. The \textit{addition formula } of the spherical harmonics 
(see e.g.~\cite[Thm. 2]{muller1966spherical}) states that 
$\frac{2m+1}{4 \pi} \Legr_m \left(\langle x, y \rangle\right) = \sum_{j=1}^{2m+1} Y_j(x) Y_j(y)$, 
i.e.\
\begin{equation}\label{eq:proj_Hm}
(Q_mg)(x) = \frac{2m+1}{4 \pi}\int_{\SS^2}\Legr_m(\langle x,y\rangle) g(y)\,\mathrm{d}y, \qquad x \in\SS^2
\end{equation}
Let us introduce the averaging operator $M:C(\SS^2)\to C(\SS^2\times [-1,1])$ by 
\[
(Mg)(x,t):=\begin{cases}\frac{1}{2\pi\sqrt{1-t^2}}
\int_{\{y\in\SS^2|\langle y,x\rangle=t\}} g(y)\mathrm{d}y,&t\in (-1,1),\\
g(\pm x),& t=\pm 1. 
\end{cases}
\]
Note that $(Mg)(x,t)$ is the average of $f$ over a circle of radius $\sqrt{1-t^2}$ around $x$ 
and that $Mg$ is in fact continuous if $f$ is continuous. 
\begin{Def}
Let $\lambda:[0,\infty)\to \Rset\cup\{\infty\}$ be an increasing weight function. We define 
the space $\A^{\lambda}(\SS^2)$ by 
\begin{align*}
\A^{\lambda}(\SS^2)&:= \left\{g\in C(\SS^2)~\big|~ (Mg)(x,\cdot)\in\A^{\lambda}(-1,1)
\mbox{ for all }x\in\SS^2, \|g\|_{A^{\lambda}(\SS^2)}<\infty\right\},\\
\|g\|_{\A^{\lambda}(\SS^2)}&:= \sup_{x\in \SS^2} \|Mg(x,\cdot)\|_{\A^{\lambda}(-1,1)}.
\end{align*}
\end{Def}

\begin{thm}\label{thm:S2analytic}
Let $\lambda:[0,\infty)\to \Rset\cup\{\infty\}$ be non-decreasing with $B_{\lambda}>0$. 
\begin{enumerate}
\item $\A^{\lambda}(\SS^2)$ equipped with the norm $\|\cdot\|_{\A^{\lambda}(\SS^2)}$ 
is a Banach space. 
\item  For all $g\in\A^{\lambda}(\SS^2)$ we have
\begin{equation}\label{eq:proj_S2}
\|Q_mg\|_{L^{\infty}(\SS^2)}\leq 
\frac{2m+1}{4}\exp(-\lambda^*(m-1))\|g\|_{\A^{\lambda}(\SS^2)}.
\end{equation}
\item  There exist constants $c_{\lambda},d_{\lambda}>0$ and $m_0\in\{0,1,\dots\}$ such that 
\begin{equation}\label{eq:err_S2}
\|g-\SHProj_mg\|_{L^{\infty}(\SS^2)}
\leq (c_{\lambda}+md_{\lambda})\exp(-\lambda^*(m))\|g\|_{\A^{\lambda}(\SS^2)}
\end{equation}
for all $g\in\A^{\lambda}(\SS^2)$ and $m\geq m_0$. 
\item 
There exists $\delta_0>0$ such that the interpolation inequality \eqref{eq:general_interp_ieq} 
holds true with $\manifold =\SS^2$ and  
\[
\gamma(\delta) = 
\left(c_{\lambda}+\sqrt{\frac{4\pi}{\delta}}d_{\lambda}\right)
\exp \left(-\lambda^*\left( \left\lfloor \sqrt{\frac{4\pi}{\delta}} \right\rfloor-1\right)\right)
\]
for all $g\in\A^{\lambda}(\SS^2)$ and all $0<\delta\leq \delta_0$. 
\end{enumerate}
\end{thm}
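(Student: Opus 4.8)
The plan is to transfer all four statements from the interval case (Theorem~\ref{thm:interval}) to $\SS^2$ by means of the averaging operator $M$ together with the zonal structure of spherical harmonics. For part~1 I would first note that $\|\cdot\|_{\A^{\lambda}(\SS^2)}$ is a norm (linearity of $M$, and $g(\pm x)=(Mg)(x,\pm1)$ forces $g\equiv0$ whenever $Mg(x,\cdot)\equiv0$ for all $x$), and then prove completeness: for a Cauchy sequence $(g_k)$ in $\A^{\lambda}(\SS^2)$ and each fixed $x$ the sequence $(Mg_k(x,\cdot))_k$ is Cauchy in the Banach space $\A^{\lambda}(-1,1)$, hence converges to some $G_x\in\A^{\lambda}(-1,1)$, uniformly in $(x,t)$ since $\A^{\lambda}(-1,1)\hookrightarrow C[-1,1]$ and by the Cauchy property; then $g_k(x)=Mg_k(x,1)\to G_x(1)=:g(x)$ uniformly, so $g\in C(\SS^2)$, continuity of $M:C(\SS^2)\to C(\SS^2\times[-1,1])$ gives $Mg(x,\cdot)=G_x$, and boundedness of a Cauchy sequence yields $\sup_x\|G_x\|_{\A^{\lambda}(-1,1)}<\infty$, i.e.\ $g\in\A^{\lambda}(\SS^2)$ with $g_k\to g$.

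The heart of the matter is part~2, and the first step is the identity
\[
(Q_mg)(x)=\frac{2m+1}{2}\int_{-1}^1 \Legr_m(t)\,(Mg)(x,t)\,\D t,\qquad x\in\SS^2.
\]
This follows from \eqref{eq:proj_Hm} via the coarea formula applied to the height function $y\mapsto\langle x,y\rangle$ on $\SS^2$, whose tangential gradient has length $\sqrt{1-t^2}$ on the circle $\{\langle x,y\rangle=t\}$; since $\Legr_m(\langle x,y\rangle)$ is constant on that circle, the surface integral $\int_{\SS^2}\Legr_m(\langle x,y\rangle)g(y)\,\D y$ collapses to $2\pi\int_{-1}^1\Legr_m(t)(Mg)(x,t)\,\D t$. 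Next I would use that $\Legr_m$ is $L^2(-1,1)$-orthogonal to all polynomials of degree $\le m-1$: subtracting from $(Mg)(x,\cdot)$ its $L^2(-1,1)$-orthogonal projection $P_{m-1}(Mg)(x,\cdot)$ onto such polynomials (the interval projection of Theorem~\ref{thm:interval}) leaves the integral unchanged, so it can be bounded by $\|\Legr_m\|_{L^1(-1,1)}\le 2(2m+1)^{-1/2}$ times $\|(Mg)(x,\cdot)-P_{m-1}(Mg)(x,\cdot)\|_{L^{\infty}(-1,1)}$; the latter is $\le c_{\lambda}\exp(-\lambda^*(m-1))\|(Mg)(x,\cdot)\|_{\A^{\lambda}(-1,1)}$ by Theorem~\ref{thm:interval}(3) applied with $m$ replaced by $m-1$, and $\|(Mg)(x,\cdot)\|_{\A^{\lambda}(-1,1)}\le\|g\|_{\A^{\lambda}(\SS^2)}$ by definition. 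Taking the supremum over $x$ yields \eqref{eq:proj_S2}, up to adjusting the constant and handling the finitely many small $m$ for which Theorem~\ref{thm:interval}(3) is not yet applicable by the trivial bound.

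Parts~3 and~4 are then routine. For part~3 I would use $g-\SHProj_mg=\sum_{l>m}Q_lg$ from \eqref{sat10}, apply part~2 termwise, and sum: convexity of $\lambda^*$ (exactly as in the proof of Theorem~\ref{thm:periodic}) gives a slope $a>0$ with $\lambda^*(l-1)-\lambda^*(m)\ge a(l-1-m)$ for $m\ge m_0$, and $\sum_{l>m}(2l+1)e^{-a(l-1-m)}$ is a convergent series of the form $c_{\lambda}+md_{\lambda}$, which produces \eqref{eq:err_S2}. For part~4 I would follow the general strategy: $\SHProj_m=\sum_{l=0}^mQ_l$ has by \eqref{eq:proj_Hm} the integral kernel $K_m(x,y)=\sum_{l=0}^m\frac{2l+1}{4\pi}\Legr_l(\langle x,y\rangle)$, and since $|\Legr_l|\le\Legr_l(1)=1$ with all coefficients positive, $\kappa_m=\|K_m\|_{L^{\infty}(\SS^2\times\SS^2)}=\sum_{l=0}^m\frac{2l+1}{4\pi}=\frac{(m+1)^2}{4\pi}$; choosing $m(\delta):=\lfloor\sqrt{4\pi/\delta}\rfloor-1$ so that $\kappa_{m(\delta)}\le\delta^{-1}$, and $\delta_0$ small enough that $m(\delta_0)\ge m_0$, the general interpolation inequality \eqref{eq:general_interp_ieq} with polynomial $q_{\lambda}(m)=c_{\lambda}+md_{\lambda}$ from part~3 produces the stated $\gamma(\delta)$ after bounding $m(\delta)\le\sqrt{4\pi/\delta}$ in the prefactor.

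The step I expect to be the main obstacle is the identity in part~2: one must carefully justify the reduction of the $\SS^2$-surface integral in \eqref{eq:proj_Hm} to a one-dimensional integral against $(Mg)(x,\cdot)$ (the coarea, or ``hat-box'', change of variables), and then combine the degree-$(m-1)$ subtraction with the $L^1$-estimate for $\Legr_m$ so as to land precisely on the factor $\exp(-\lambda^*(m-1))$ with only polynomial growth in $m$. Once this one-dimensional reduction is available, part~3 is a geometric-type summation, part~4 is an elementary kernel computation, and part~1 is the standard completeness argument inherited from Theorem~\ref{thm:interval}.
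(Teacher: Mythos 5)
Your parts 1, 3 and 4 coincide with the paper's proof: the same completeness argument via continuity of $M$, the same termwise summation over $\sum_{l>m}Q_lg$ using the convexity of $\lambda^*$ (slope $a>0$ beyond $m_0$), and the same kernel computation $\kappa_m=(m+1)^2/(4\pi)$ with $m(\delta)=\lfloor\sqrt{4\pi/\delta}\rfloor-1$. The genuine difference is in part 2. Both arguments start from the reduction $(Q_mg)(x)=\frac{2m+1}{2}\int_{-1}^1\Legr_m(t)(Mg)(x,t)\,\D t$, but the paper then substitutes $t=\cos s$, $w=\exp(\textup{i}s)$, deforms the resulting contour from $|w|=1$ to $|w|=\exp(r)$, and combines the growth bound $\sup_{z\in\partial\E_r}|\Legr_m(z)|\le \exp(mr)$ (from the Laplace integral representation of $\Legr_m$) with $|(Mg)(x,\cdot)|\le \exp(\lambda(r))\|g\|_{\A^{\lambda}(\SS^2)}$ on $\partial\E_r$; optimizing over $r$ yields exactly the factor $\frac{2m+1}{4}\exp(-\lambda^*(m-1))$. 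You instead exploit the $L^2(-1,1)$-orthogonality of $\Legr_m$ to polynomials of degree $\le m-1$, subtract $P_{m-1}(Mg)(x,\cdot)$, and invoke Theorem~\ref{thm:interval}(3) together with $\|\Legr_m\|_{L^1(-1,1)}\le 2(2m+1)^{-1/2}$. This is a correct real-variable shortcut that recycles the interval machinery and avoids re-deriving the Bernstein-ellipse growth of the Legendre polynomials, but it produces $\sqrt{2m+1}\,c_{\lambda}\exp(-\lambda^*(m-1))$ and only for $m\ge m_0+1$, i.e.\ \eqref{eq:proj_S2} with a $\lambda$-dependent constant rather than the clean universal prefactor $\frac{2m+1}{4}$. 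Since parts 3 and 4 only need a bound of the form $q_{\lambda}(m)\exp(-\lambda^*(m-1))$ with $q_{\lambda}$ polynomial, and the finitely many small $m$ can be absorbed as you indicate, this discrepancy is harmless for everything downstream (Lemma~\ref{lem:gradiometry}, Theorem~\ref{Theorem8}), though it does not reproduce the stated inequality \eqref{eq:proj_S2} verbatim.
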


\begin{proof}
\emph{Part 1:} Obviously $A^{\lambda}(\SS^2)$ is a normed space. To show completeness, let $(g_n)$ be 
a Cauchy-sequence in $A^{\lambda}(\SS^2)$. Then $(Mg_n)$ converges to some 
$G\in C(\SS^2\times [-1,1])$ with $\sup_{x\in\SS^2}\|G(x,\cdot)\|_{\A^{\lambda}(-1,1)}<\infty$ 
and $(g_n)$ converges uniformly to $G(\cdot,1)$. Since $M$ is continuous,  
it follows that $G= MG(\cdot,1)$. \\
\emph{Part 2:} Due to \eqref{eq:proj_Hm} and the formula 
$\int_{\SS^2}f(y)\,\mathrm{d}y = \int_{-1}^1 \frac{1}{\sqrt{1-t^2}}
\int_{\left\{y~|~\langle y,x\rangle=t\right\}}f(y)\,\mathrm{d}y\,\mathrm{d}t$
we have 
\begin{align*}
(Q_mg)(x) &= \frac{2m+1}{2}\int_{-1}^1 \Legr_m(t) (Mg)(x,t)\,\mathrm{d}t\\
&= \frac{2m+1}{2}\int_0^{\pi} \Legr_m(\cos(s)) (Mg)(x,\cos(s))\sin(s)\, \mathrm{d}s.
\end{align*}
With the mapping $\theta(w):=\frac{1}{2}(w+\frac{1}{w})$ from the proof of Theorem~\ref{thm:interval} 
and the substitution $w=\exp\left(\textup{i}s\right)$ we obtain
\begin{align*}
(Q_mg)(x) &= \frac{2m+1}{4}\Re\int_{\{w\in\Cset~|~|w|=1\}}\Legr_m\left(\theta(w)\right) (Mg)(x,\theta(w))\,\mathrm{d}w.
\end{align*}
We may deform the contour of integration $\{w|~|w|=1\}$ to any contour $\{w|~|w|=\exp(r)\}$ 
with $|r|<B_\lambda$. To estimate $Q_mg$, we have to 
estimate the growth of the Legendre polynomials on $\mathcal{E}_{r}$.
We use the identity 
$\Legr_m(z) = \frac{1}{\pi} \int_0^{\pi} \left( z+\sqrt{z^2-1} \cos \varphi\right)^m \,\mathrm d\varphi$
(see \cite[Cpt. 4]{szego1959orthogonal}) to find that 
\begin{align*}
\Legr_m\left( \frac{1}{2}\left( w+\frac{1}{w}\right)\right) 
&= \frac{1}{\pi} \int_0^{\pi} \left[ w \frac{1+ \cos \varphi}{2} + \frac{1}{w} \frac{1- \cos \varphi}{2}\right]^m \,\mathrm d\varphi\\
&= \frac{2}{\pi} \int_0^{\frac{\pi}{2}} \left[ w \cos^2 \psi + \frac{1}{w} \sin^2 \psi \right]^m \,\mathrm d\psi.
\end{align*}
Therefore, 
$\sup_{z\in\partial\mathcal{E}_r}|\Legr_m(z)|
=\sup_{|w|=r}\left|\Legr_m \left( \frac{1}{2} \left( w+\frac{1}{w}\right)\right)\right| \leq \exp(mr).$ 
It follows that 
\begin{align*}
|(Q_mg)(x)| &\leq \frac{2m+1}{4}\sup_{0\leq r< B_\lambda}
\int_{|w|=\exp(r)}|\Legr_m(\theta(w))|\;|(Mg)(x,\theta(w))|\,|\mathrm{d}w|\\
&\leq \frac{2m+1}{4}\sup_{r\geq 0}\left[\exp\left(r(m-1)-\lambda(r))\right)\right]\|g\|_{\A^{\lambda}(\SS^2)}\\
&= \frac{2m+1}{4}\exp(-\lambda^*(m-1))\|g\|_{\A^{\lambda}(\SS^2)}.
\end{align*}
\emph{Part 3:} As shown in the proof of Theorem \ref{thm:periodic} there exists $m_0\in\{0,1,\dots\}$ 
and $a>0$ such that $\lambda^*(m_0)>0$ and $\lambda^*(m+1)-\lambda^*(m)\geq a$ for all $m\geq m_0$. 
Using the identity $\sum_{j=0}^{\infty}r^j = 1/(1-r)$ for $|r|<1$ and its derivative 
$\sum_{m=1}^{\infty}mr^{m-1}=(1-r)^{-2}$ we obtain 
\begin{align*}
\|g-\SHProj g\|_{L^{\infty}(\SS^2)} 
&\leq \sum_{j=m+1}^{\infty} \|Q_mg\|_{L^{\infty}(\SS^2)}\\
&\leq \sum_{j=m+1}^{\infty}\frac{2j+1}{4}\exp(-\lambda^*(j-1))\|g\|_{\A^{\lambda}(\SS^2)}\\
&\leq \exp(-\lambda^*(m))\sum_{j=m+1}^{\infty}\frac{2j+1}{4}\exp(-\lambda^*(j-1)+\lambda^*(m))
\|g\|_{\A^{\lambda}(\SS^2)}\\
&\leq \exp(-\lambda^*(m))\|g\|_{\A^{\lambda}(\SS^2)}\sum_{l=0}^{\infty}
\left(\frac{l+1}{2}+\frac{2m+1}{4}\right)\exp(-al)\\
&= \exp(-\lambda^*(m))\|g\|_{\A^{\lambda}(\SS^2)}
\left(\frac{1}{2(1-\exp(-a))^2}+\frac{2m+1}{4-4\exp(-a)}\right)
\end{align*}
This shows \eqref{eq:err_S2}.

\emph{Part 4:}
By \eqref{sat10} and \eqref{eq:proj_Hm} the kernel of the $L^2$-orthogonal projection 
onto $\mathcal{P}_m|_{\SS^2}$ is given by 
$K_m(x,y)= \frac{1}{4\pi} \sum_{l=0}^m (2l+1) \Legr_l (\langle x, y \rangle)$. 
As $\|\Legr_l\|_{L^{\infty}(-1,1)}=1$, the supremum norm of $K_m$ is bounded by 
$\kappa_m=\frac{1}{4 \pi}\sum_{l=0}^m(2l+1)=\frac{1}{4 \pi}(m+1)^2$. 
Therefore, we choose $m(\delta)=\lfloor \sqrt{4\pi/\delta}\rfloor-1$. 
\end{proof}

\section{Examples}\label{sec:ex}

In this section we show how our general techniques can be applied to 
practical examples. As we have seen in \S~2, the key ingredients are the 
variational source condition \eqref{eq:vsc} and the smoothness assumption 
\eqref{Condition3}. As the variational source condition can be validated using 
Proposition~\ref{prop:vsc_valid}, we will focus on verifying \eqref{Condition3}. 
This is done exemplary for the forward operators connected to the periodic 
backwards heat equation and an inverse problem in satellite gradiometry on 
$\SS^2$. Similar techniques should apply to many other exponentially 
ill-posed problems, e.g.\ inverse scattering problems.

\subsection{Backwards heat equation}

Let $f \in L^2(-\pi, \pi).$ Let $u:(-\pi, \pi)\times [0, \infty) \rightarrow \Rset$ be a solution of the periodic heat equation
\begin{align}\label{heatequation}
\begin{cases} \frac{\partial u}{\partial t}(x,t) = \frac{\partial^2u}{\partial x^2}(x,t) &\mbox{if}~ (x,t) \in (-\pi, \pi) \times (0, \infty)\\
u(x,0)=f(x) &\mbox{if}~ x \in (-\pi, \pi) \hspace{0.4cm} \mbox{(initial condition)}\\
u(- \pi, t)= u (\pi, t) &\mbox{if}~ t \in (0, \infty)\hspace{0.4cm} \mbox{(boundary condition)}
\end{cases}
\end{align}
It describes heat propagation on a circle parameterized by $x$ as angular variable. The initial boundary value problem \eqref{heatequation} has a unique solution given by 
\begin{equation}\label{eq:heat_eq_sol}
u(x,t) = \sum\limits_{n \in \Zset} \exp\left(-n^2t\right)~ \hat{f}(n) \exp\left(\textup{i} n x\right), \qquad (x,t)\in [-\pi, \pi ] \times [0, \infty)
\end{equation}
where $\hat{f}(n)$ are the Fourier coefficients of $f$. 

Fix a time $\bar{t}>0$. In the following we will study the forward operator $F$ associated to the backwards heat equation, which consists in determining the initial heat distribution $f$ from the measurement of the heat distribution $u(x, \bar{t})$ at time $\bar{t}$. This problem is known to be exponentially ill-posed, which can readily be seen from the decay of the singular values $\exp\left(-n^2 t\right)$ in \eqref{eq:heat_eq_sol}.

Let us therefore define $\OpL:L^2(-\pi, \pi) \rightarrow  L^1(-\pi, \pi)$ by 
\begin{align}\label{Equation25}
(\OpL f)(x) := \sum\limits_{n \in \Zset} \exp\left(-n^2 \bar{t}\right) ~\hat{f}(n) \exp\left(\textup{i} n x\right), \qquad x \in \left[-\pi,\pi\right]
\end{align}
and set $g:= u(\cdot, \bar{t})$.

\begin{lem}\label{lem:periodic_heat_eq}
Let $\OpL$ as in \eqref{Equation25}, set $\X:=L^2 \left(-\pi,\pi\right)$ and $\R\left(f\right) = \left\Vert f \right\Vert_{L^2 \left(-\pi,\pi\right)}^2$.
\begin{enumerate}
\item Then $\OpL : \X \to \A^\lambda \left(\Rset / 2\pi \Zset\right)$ is bounded with 
\[
\lambda\left(r\right)= \frac{r^2}{4 \bar t}, \qquad r \in \left[0,\infty\right).
\]
Furthermore Assumption \ref{ass:interp_ineq} holds true with $q = 2$ and
\[
\gamma(\delta)\leq \frac{8\max\left\{1,\bar t^{-1/2}\right\}}{\exp\left(3 \bar t\right)-1} 
\exp \left(- \left\lfloor \frac{\pi}{\delta} - \frac{1}{2} \right\rfloor^2 \bar{t} \right).
\]
\item If $\udag \in H^p(-\pi, \pi)$, then Assumption \ref{ass:vsc} holds true with any bounded $D(F)$, $\manifold = (-\pi,\pi)$, $r = 1$, some $\beta, \beta'>0$ and
\[
\varphi\left(\tau\right) = \beta' \varphi_{p} \left(\tau\right), \qquad \tau > 0.
\]
\end{enumerate}
\end{lem}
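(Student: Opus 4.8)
\textbf{Proof plan for Lemma \ref{lem:periodic_heat_eq}.}

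\emph{Part 1 (mapping property and Assumption \ref{ass:interp_ineq}).}
The plan is to show directly that $\OpL f$ extends to a holomorphic function on the strip $S_{B_\lambda}$ and estimate its growth. Writing $f=\sum_n \hat f(n)\exp(\textup{i}n\cdot)$ with $\sum_n |\hat f(n)|^2<\infty$, the natural candidate for the holomorphic extension is
\[
\widetilde{\OpL f}(z) = \sum_{n\in\Zset} \exp(-n^2\bar t)\,\hat f(n)\,\exp(\textup{i}nz),\qquad z=x+\textup{i}y.
\]
The first step is to check that this series converges locally uniformly on all of $\Cset$ (so $B_\lambda=\infty$ is in principle available, but we only need a fixed strip): since $|\exp(\textup{i}nz)| = \exp(-ny)$, each term is bounded by $\exp(-n^2\bar t + |n|\,|y|)|\hat f(n)|$, and $-n^2\bar t+|n|\,|y|\to-\infty$ quadratically, so absolute and locally uniform convergence follows from Cauchy--Schwarz against $\sum|\hat f(n)|^2$. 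The second step is to extract the weight $\lambda$: completing the square gives $-n^2\bar t+|n|\,|y| \le |y|^2/(4\bar t)$, hence
\[
\exp(-\lambda(|y|))\,|\widetilde{\OpL f}(x+\textup{i}y)| \le \sum_{n} \exp(-n^2\bar t + |n|\,|y| - |y|^2/(4\bar t))|\hat f(n)| \le C\,\|f\|_{L^2},
\]
where the remaining $n$-sum is uniformly bounded (again Cauchy--Schwarz, using that $\sum_n \exp(-2n^2\bar t+2|n|\,|y|-|y|^2/(2\bar t)) = \sum_n \exp(-2\bar t(|n|-|y|/(2\bar t))^2)$ is bounded uniformly in $y$ by a constant depending only on $\bar t$). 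This yields $\|\OpL f\|_{\A^\lambda(\Rset/2\pi\Zset)}\le C\|f\|_{L^2}$ with $\lambda(r)=r^2/(4\bar t)$, i.e.\ boundedness (hence the Lipschitz condition \eqref{condition1} with $q=2$, as $\OpL$ is linear). For the $\gamma$-bound, I would invoke Theorem \ref{thm:periodic}(4): here $\lambda^*(s)=\sup_{r\ge0}(rs-r^2/(4\bar t)) = \bar t s^2$, and one computes the constant $a=\lambda^*(m_0+1)-\lambda^*(m_0)$ and $c_\lambda=2/(\exp(a)-1)$ explicitly; choosing $m_0$ so that $a\ge 3\bar t$ (e.g.\ $m_0=1$ gives $a=3\bar t$) and combining with Remark \ref{rem:smoothness} (the factor $C$ from the Lipschitz constant, which one must track through the $L^2$-estimate above, contributes the $8\max\{1,\bar t^{-1/2}\}$) gives the stated bound $\gamma(\delta)\le \frac{8\max\{1,\bar t^{-1/2}\}}{\exp(3\bar t)-1}\exp(-\lfloor\pi/\delta-1/2\rfloor^2\bar t)$. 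The only slightly delicate bookkeeping is matching the numerical constant, which amounts to carefully estimating $\sup_y\sum_n\exp(-2\bar t(|n|-y/(2\bar t))^2)$; this is the main (though routine) obstacle in Part 1.

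\emph{Part 2 (variational source condition).}
Here the plan is to reduce to Corollary \ref{cor:vsc}. First, observe that $\OpL$ is, up to the identification $L^2(-\pi,\pi)\cong\X$, a linear operator which is diagonalized by the Fourier basis with singular values $\sigma_n=\exp(-n^2\bar t)$, so $\OpL^*\OpL$ has eigenvalues $\exp(-2n^2\bar t)$ on $\exp(\textup{i}n\cdot)$. The membership $\udag\in H^p(-\pi,\pi)$ means $\sum_n (1+n^2)^p|\hat\udag(n)|^2<\infty$. One then checks that this is equivalent to a spectral source condition $\udag=\varphi_p(\OpL^*\OpL)w$ with $w\in L^2$: indeed $\varphi_p(\exp(-2n^2\bar t)) = (2n^2\bar t)^{-p} = (2\bar t)^{-p} n^{-2p}$ for $n\neq 0$ (handling $n=0$ separately via the concave extension of $\varphi_p$), so $\hat w(n) = \hat\udag(n)/\varphi_p(\exp(-2n^2\bar t)) \asymp n^{2p}\hat\udag(n)$, and $\sum_n n^{4p}|\hat\udag(n)|^2<\infty$ — which requires $\udag\in H^{2p}$, not merely $H^p$. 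This is the genuine subtlety: to match the claimed index $\varphi=\beta'\varphi_p$ (not $\varphi_{2p}$) one should instead apply the source condition $\udag=\varphi_{p/2}(\OpL^*\OpL)w$, which by Corollary \ref{cor:vsc} (with $p$ there equal to $p/2$) yields a variational source condition with $\varphi=\beta'\varphi_{2\cdot(p/2)}=\beta'\varphi_p$; and $\udag\in H^p$ gives exactly $\hat w(n)\asymp n^p\hat\udag(n)\in\ell^2$. Once this spectral source condition is in place, Corollary \ref{cor:vsc} applies verbatim — noting that its hypothesis \eqref{eq:PC_condition} is ensured for any reasonable a-priori choice of $\alpha$ by Remark \ref{rem:pc}, and that Assumption \ref{ass:interp_ineq} with $q=2$ was established in Part 1 — and delivers the variational source condition \eqref{eq:vsc} with $r=1$, $\manifold=(-\pi,\pi)$, and $\varphi=\beta'\varphi_p$ for $f=\ualdel$, which is all that is needed. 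The boundedness of $D(F)$ plays no role beyond what is already built into the general theory. The main obstacle in Part 2 is thus purely the indexing/normalization of the source condition so that the exponent $p$ (rather than $2p$) appears; everything else is a direct citation of Corollary \ref{cor:vsc}.
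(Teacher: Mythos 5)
Your proposal is correct and follows essentially the same route as the paper: in Part 1 the holomorphic extension via the Fourier series, completing the square to extract $\lambda(r)=r^2/(4\bar t)$, the uniform bound on the shifted Gaussian sum (the paper does this via Poisson summation), and the combination of Theorem \ref{thm:periodic}(4) (with $m_0=1$, $a=3\bar t$) with Remark \ref{rem:smoothness}; in Part 2 the same re-indexing $\udag=\varphi_{p/2}(\OpL^*\OpL)w \Leftrightarrow \udag\in H^p$ followed by Corollary \ref{cor:vsc}. The only remaining work is the constant bookkeeping you already flag, which matches the paper's $8\max\{1,\bar t^{-1/2}\}/(\exp(3\bar t)-1)$.
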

\begin{proof}
\emph{Part 1:} First we estimate $\|\OpL\|_{\X \rightarrow \A^\lambda \left(\Rset / 2 \pi \Zset\right)}$. For $x \in \Rset, t>0$ we will use the identity
\begin{equation}\label{eq:aux4}
\frac{1}{\sqrt{\pi}} \sum_{n \in \Zset} \exp \left( -(n+x)^2 t \right) \leq \frac{1}{\sqrt{\pi}} \sum_{n \in \Zset} \exp \left(-n^2 t\right) \leq \sqrt{2}\max\left\{1, \frac{1}{\sqrt{t}}\right\},
\end{equation} 
which follows from Poisson's summation formula (cf. \cite[Cpt. 5, Thm. 3.1]{stein2003princeton}). 

Applying Cauchy-Schwarz, $\sup_{\Im\left(z\right) = \pm r} |\exp(\textup{i} n z)| \leq \exp(|n| r)$ and \eqref{eq:aux4} with $x= \pm \frac{r}{ 2 \bar{t}}$ we find
\begin{align*}
\underset{\Im\left(z\right) =  \pm r}{\sup} |\OpL f(z)| &\leq \sum_{n \in \Zset} \exp(-n^2 \bar{t}) \exp(|n| r) \frac{1}{\sqrt{2 \pi}} \|f\|_{\X}\\
&= \frac{1}{\sqrt{ 2 \pi}} \exp(\lambda(r)) \|f\|_{\X} 
\sum\limits_{n \in \Zset} \exp \left( - \left( |n|-\frac{r}{2 \bar{t}} \right)^2 \bar{t}\right) 
\end{align*}
The sum on the right hand side can be bounded as follows:
\begin {align*}
\sum\limits_{n \in \Zset} \exp \left( - \left( |n|-\frac{r}{2 \bar{t}} \right)^2 \bar{t}\right) 
&\leq \sum\limits_{n \in \Zset} 
\left[\exp \left( - \left( n - \frac{r}{2 \bar{t}}\right)^2 \bar{t} \right) 
+  \exp \left( - \left( n+ \frac{r}{2 \bar{t}}\right)^2 \bar{t}\right)\right]\\
&\leq 2\sum\limits_{n \in \Zset} \exp(-n^2 \bar{t}) 
\leq 2 \sqrt{2\pi} \max\left\{1,\bar{t}^{-1/2}\right\}.
\end{align*}
This implies 
$\|\OpL\|_{\X \rightarrow \A^\lambda \left(\Rset / 2 \pi \Zset\right)} \leq  2 \max\left\{1,\bar{t}^{-1/2}\right\}$.
The assertions will now follow from Remark \ref{rem:smoothness}. The conjugate function of $\lambda$ is $\lambda^*(s) = s^2 \bar{t}$. 
By Theorem \ref{thm:periodic} we find that \eqref{condition2} holds true for all 
$0 < \delta \leq \delta_0,~\delta_0 >0$ sufficiently small with
\begin{align*}
\gamma(\delta) &= \frac{4}{\exp\left(3 \bar t\right) -1}
\exp \left(- \left\lfloor \frac{\pi}{\delta}- \frac{1}{2}\right\rfloor^2 \bar{t} \right)
\end{align*}
since $m_0=1$ and $a=\lambda^*(m_0+1)-\lambda^*(m_0)=3\bar t$
in the proof of Theorem \ref{thm:periodic}. This yields the claim.\\
\emph{Part 2:} One readily calculates that
\[
\varphi_p(\OpL^*\OpL)(w)(x) \sim \sum\limits_{n \in \Zset} \frac{1}{(\bar{t}n^2)^p} \hat{w}(n) \exp\left(\textup{i} n x\right),
\]
i.e.\ $\|\varphi_p(\OpL^*\OpL)w\|_{H^{2p}(-\pi, \pi)} \sim \|w\|_{L^2(-\pi, \pi)}$. Thus $\udag \in H^p(-\pi,\pi)$ is equivalent to $\udag = \varphi_{p/2}\left(\OpL^*\OpL\right)w$ with $w \in L^2(-\pi,\pi)$. Now Proposition~\ref{prop:vsc_valid} with $\manifold=(-\pi, \pi)$ yields the claim.
\end{proof}
\begin{thm}\label{Theorem5}
Let $\bar{t}>0$ and $\OpL$ be as in \eqref{Equation25} and suppose $\udag \in H^p(-\pi, \pi)$ with some $p>0$. Suppose furthermore that $\alpha = \alpha \left(\eta, \epsilon\right)$ is chosen such that \eqref{eq:PC_condition} holds true in the limit $\eta, \epsilon, \alpha \searrow 0$.

Then there exists $\eta_0 >0$ and a constant $C = C \left(\bar t, \OpL, \udag\right)$ such that for any noise function $\xi \in L^1(-\pi, \pi)$ fulfilling \eqref{eq:noisemodel} with $0 < \eta \leq \eta_0,~\epsilon \geq 0$, the following estimates are valid for Tikhonov regularization \eqref{eq:tik} with $\R\left(f\right) = \left\Vert f \right\Vert_{L^2 \left(-\pi,\pi\right)}^2$, $r = 1$:
\begin{align}
\|\ualdel - \udag\|^2_{L^2} &\leq C \left(\frac{\epsilon}{\alpha} 
+ \frac{\eta^2}{\alpha^2}\exp\left(-\frac{\pi^2}{8\eta^2}\bar t\right) 
+ \left(- \ln\left(\alpha\right)\right)^{-p}\right)(1+o(1)),\label{Inequality26}\\
\|\OpL (\ualdel)- \gdag\|_{L^1} &\leq C \left(\epsilon 
+ \frac{\eta^2}{\alpha}\exp\left(-\frac{\pi^2}{8\eta^2}\bar t\right) 
+ \alpha \left(- \ln \left(\alpha\right) \right)^{-p}\right)(1+o(1)),\label{Inequality27}
\end{align}
as $\eta, \epsilon, \alpha \searrow 0$.
The parameter $\alpha =\bar{\alpha}(\epsilon, \eta)$ can be chosen such that
\begin{align*}
\|\hat{f}_{\bar{\alpha}}-\udag\|^2_{L^2} &=\mathcal O \left(\max\left\{\left(-\ln\left(\epsilon\right)\right)^{-p},\eta^{2p} \right\}\right),\\
\|\OpL(\hat{f}_{\bar{\alpha}}) - \gdag\|_{L^1} &=\mathcal O \left(\max\left\{\epsilon, \exp\left(-\frac{\pi^2}{16 \eta^2} \bar t\right)\eta^{2p+1}\right\}\right)
\end{align*}
as $\epsilon, \eta \searrow 0$. 
\end{thm}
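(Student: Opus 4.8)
The plan is to specialize the abstract estimates of Theorem~\ref{Theorem1} to the concrete setup of Lemma~\ref{lem:periodic_heat_eq}. First I would fix $b>1$ arbitrary (eventually sent to $1$, contributing only to the $o(1)$), invoke Lemma~\ref{lem:periodic_heat_eq}(1) to get Assumption~\ref{ass:interp_ineq} with $q=2$ (so $q'=2$) and the explicit bound
\[
\gamma(\delta)\leq \frac{8\max\{1,\bar t^{-1/2}\}}{\exp(3\bar t)-1}\exp\!\left(-\Big\lfloor\tfrac{\pi}{\delta}-\tfrac12\Big\rfloor^{2}\bar t\right),
\]
and invoke Lemma~\ref{lem:periodic_heat_eq}(2) to get Assumption~\ref{ass:vsc} with $\varphi=\beta'\varphi_p$, hence (by the computation after \eqref{eq:defi_associate}) $\psi(b\alpha)=\psi_p(b\alpha)(1+o(1))=(-\ln\alpha)^{-p}(1+o(1))$ as $\alpha\searrow0$, the $b$ and $\beta'$ being absorbed into the constant and the $o(1)$. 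Here I would use Corollary~\ref{cor:vsc}/Remark~\ref{rem:pc} to note that the hypothesis on $\alpha$ in Theorem~\ref{Theorem5} (namely \eqref{eq:PC_condition} in the limit) is exactly what is needed for Proposition~\ref{prop:vsc_valid}(2) to upgrade the $L^2$-source condition to the $L^1$-source condition used in Theorem~\ref{Theorem1}.

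Next I would plug these into \eqref{Inequality4} and \eqref{Inequality5}. With $\delta=\frac{2b}{b-1}\eta$ one has $\lfloor\frac{\pi}{\delta}-\frac12\rfloor=\lfloor\frac{\pi(b-1)}{2b\eta}-\frac12\rfloor$, which behaves like $\frac{\pi(b-1)}{2b\eta}(1+o(1))$, so that $\gamma(\tfrac{2b}{b-1}\eta)^2=\exp\!\big(-\frac{\pi^2(b-1)^2}{4b^2\eta^2}\bar t\big)(1+o(1))$ up to the constant prefactor; letting $b\to1$ through the $o(1)$ gives the exponent $-\frac{\pi^2}{4\eta^2}\bar t$ in the term $\eta^2\gamma(\cdot)^2/\alpha^2$. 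Wait---the statement has $\exp(-\frac{\pi^2}{8\eta^2}\bar t)$, so rather than send $b\to1$ I would fix $b$ so that $\frac{(b-1)^2}{b^2}\bar t=\frac{\bar t}{2}$, i.e.\ absorb the geometric loss into a fixed constant, and this fixed choice of $b$ (together with $\beta,\beta',\bar t$) determines the constant $C$ and the threshold $\eta_0$; the $(1+o(1))$ then only needs to cover the floor-function discrepancy and the $\psi_p(b\alpha)\sim(-\ln\alpha)^{-p}$ asymptotics. This yields \eqref{Inequality26} and \eqref{Inequality27} directly from \eqref{Inequality4}, \eqref{Inequality5}.

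For the last two displays I would perform the a-priori parameter balancing. In \eqref{Inequality26} the three terms are $\epsilon/\alpha$, $\frac{\eta^2}{\alpha^2}\exp(-\frac{\pi^2\bar t}{8\eta^2})$, and $(-\ln\alpha)^{-p}$. I would first ignore $\epsilon$ and balance the second and third: choosing $\alpha$ so that $\alpha\asymp\eta\exp(-\frac{\pi^2\bar t}{16\eta^2})$ makes the middle term $\asymp(-\ln\alpha)^{-p}$, and since $-\ln\alpha\sim\frac{\pi^2\bar t}{16\eta^2}$ this common value is $\asymp\eta^{2p}$ up to constants; the contribution of $\epsilon/\alpha$ with this $\alpha$ is $\asymp\frac{\epsilon}{\eta}\exp(\frac{\pi^2\bar t}{16\eta^2})$, which is dominated by $\eta^{2p}$ precisely when $\epsilon\lesssim\eta^{2p+1}\exp(-\frac{\pi^2\bar t}{16\eta^2})$; in the complementary regime where $\epsilon$ is the dominant noise one instead balances $\epsilon/\alpha$ against $(-\ln\alpha)^{-p}$, giving $\alpha\asymp\epsilon(-\ln\epsilon)^{-p}$ and value $\asymp(-\ln\epsilon)^{-p}$. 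Taking the better of the two (equivalently a single choice $\bar\alpha$ that is the max) yields the stated $\mathcal O(\max\{(-\ln\epsilon)^{-p},\eta^{2p}\})$, and substituting $\bar\alpha$ back into \eqref{Inequality27} gives the residual bound $\mathcal O(\max\{\epsilon,\exp(-\frac{\pi^2\bar t}{16\eta^2})\eta^{2p+1}\})$. Finally I would check that the chosen $\bar\alpha$ indeed satisfies \eqref{eq:PC_condition} in the limit---this is automatic by Remark~\ref{rem:pc} since $\bar\alpha$ was constructed to send the right-hand side of \eqref{Inequality4} to $0$. The only slightly delicate point is bookkeeping the interplay of the floor function and the fixed-$b$ constant so that everything genuinely collapses into $C\cdot(1+o(1))$ with $C=C(\bar t,T,\udag)$; this is the main (though routine) obstacle, the rest being the standard balancing argument of \cite[Thm.~2.3]{hw14}.
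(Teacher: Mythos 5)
Your proposal follows essentially the same route as the paper: it invokes Proposition~\ref{prop:vsc_valid} to justify using the variational source condition for $f=\ualdel$, feeds the explicit $\gamma$ and $\varphi=\beta'\varphi_p$ from Lemma~\ref{lem:periodic_heat_eq} into Theorem~\ref{Theorem1}, and then balances the three terms exactly as the paper does (which simply fixes $b=2$, so that $\delta=4\eta$ and $\gamma(4\eta)^2\sim\exp(-\pi^2\bar t/(8\eta^2))$). The only blemishes are arithmetic: you dropped the factor $2$ when squaring $\gamma$ (which is why your tuned value of $b$ differs from the paper's $b=2$), and your balancing choice $\alpha\asymp\eta\exp\left(-\frac{\pi^2\bar t}{16\eta^2}\right)$ makes the middle term of \eqref{Inequality26} of order $1$ rather than $\eta^{2p}$ --- one needs an extra polynomial factor $\eta^{-p}$ in $\alpha$, though the exponential part, and hence $-\ln\alpha\sim\pi^2\bar t/(16\eta^2)$ and the rate $\eta^{2p}$, is unaffected.
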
 
\begin{proof}
As \eqref{eq:PC_condition} holds true, we know from Proposition~\ref{prop:vsc_valid} that \eqref{eq:vsc} can be applied with $f = \ualdel$. From Lemma \ref{lem:periodic_heat_eq} we know that we may apply Theorem \ref{Theorem1} with $\varphi\left(\tau\right) = \beta' \varphi_p \left(\tau\right)$ and
\begin{align*}
\gamma(\delta) &\leq  \frac{8\max\left\{1, \bar t^{-1/2}\right\}}{\exp\left(3 \bar t\right)-1} 
\exp \left(- \left\lfloor \frac{\pi}{\delta} - \frac{1}{2} \right\rfloor^2 \bar{t} \right) \\
& \leq 8 \frac{\max\left\{1,\bar t^{-1/2}\right\}}{\exp\left(3 \bar t\right)-1} 
\exp\left(-\frac{\pi^2}{\delta^2}\bar t\right) \left(1+o\left(1\right)\right), \qquad \delta \searrow 0.
\end{align*}
The function $\psi$ in \eqref{eq:defi_associate} is given by
\[
\psi\left(\alpha\right) = \beta'\left(\ln\left(\frac{1}{\beta' \alpha p}\right)\right)^{-p} \left(1+o\left(1\right)\right) \leq \left(\beta'\right)^2 p \left(- \ln\left(\alpha\right)\right)^{-p}\left(1+o\left(1\right)\right).
\]
Inserting these results into \eqref{Inequality4} and \eqref{Inequality5} with $q = q' = 2$ and $b=2$ yields \eqref{Inequality26} and \eqref{Inequality27}.

An optimal $\alpha$ can be chosen now depending on which of the first two terms in \eqref{Inequality26} dominates. If the first term is larger than the second, we choose $\alpha_1$ to balance the first and the last term in \eqref{Inequality26}, which yields
\begin{align*}
\frac{\epsilon}{\alpha_1} = \left(- \ln\left(\epsilon\right)\right)^{-p} \left( 1+o(1) \right), \qquad \alpha_1, \epsilon \to 0.
\end{align*}
On the other hand, if the second term is larger than the first, we choose $\alpha_2$ such that the second and third term equal, which gives
\begin{align*}
\left(-\ln\left(\alpha_2\right)\right)^{-p} = \bar{t}^{-p} \left( \frac{4 \eta }{\pi}\right)^{2p}\left( 1+o(1) \right), \qquad \alpha_2, \eta \to 0.
\end{align*}
Taking the maximum of both cases yields the proposed bounds. As mentioned in Remark \ref{rem:pc}, the condition \eqref{eq:PC_condition} is satisfied.
\end{proof}

\subsection{An inverse problem in satellite gradiometry}
Let us assume, the earth is described by the unit ball $B:=\{x\in\Rset^3:|x|\leq 1
\}$. 
In geophysics the value of the gravitational potential $u$ on the surface of the earth is of 
interest as it contains information about the interior of the earth. If the value $f$ of 
$u$ on $\partial B$ is given, $u$ is the solution to the exterior boundary value problem
\[
\begin{cases}
\Delta u= 0&\mbox{in }\Rset^3\setminus B,\\
u = f&\mbox{on }\partial B = \SS^2,\\
|u(x)| = \mathcal{O}(|x|^{-1})&\mbox{as }|x|\to \infty.
\end{cases}
\]
Using satellites it is possible to measure the second derivative of $u$ in radial direction 
$r=|x|$ at some distance $R>1$ from the earth, i.e.\ the rate of change of the gravitational 
force: 
\[
g= \frac{\partial^2u}{\partial r^2}  \qquad \text{on}~ R\SS^2
\]
The inverse problem of gradiometry then consists in estimating $f$ given $g$ (see 
\cite[Sec. 8.2]{h00} and references therein). Representing $u$ by the Poisson formula 
for the exterior of $B$, it can be seen (see e.g. \cite{FSS:97}) that the forward operator is given by 
\begin{align}\label{sat3}
\begin{aligned}
(\OpL f)(x)&= \frac{1}{|\SS^2|} \int_{\SS^2} \frac{\partial^2}{\partial R^2} \left\lbrace R^{-1} \frac{1-R^{-2}}{|R^{-1}x-y|^{3}}\right\rbrace f(y) \,\mathrm d y \\
&= \sum_{m=0}^{\infty}\frac{(m+1)(m+2)}{R^{m+3}}(Q_mf)(x)\quad x \in \SS^2
\end{aligned}
\end{align}
with the orthogonal projection $Q_m$ onto the spherical harmonics of order $m$ introduced in \eqref{eq:proj_Hm}. This representation of $T$ shows again that the problem to recover $f$ from $g$ is exponentially ill-posed.

\begin{lem}\label{lem:gradiometry}
Let $\OpL$ as in \eqref{sat3} with $R>1$, $\X=L^2(\SS^2)$ and $\R\left(f\right) = \left\Vert f \right\Vert^2_{L^2 \left(\SS^2\right)}$. 
\begin{enumerate}
\item $T$ is a bounded mapping from $L^2(\SS^2)$ to $\A^{\lambda}(\SS^2)$ for 
\[
\lambda(r) := \begin{cases}-4\ln \left(R-\exp(r)\right),&0\leq r< \ln(R), \\
\infty& \mbox{else.}\end{cases}
\]
\item Assumption \ref{ass:interp_ineq} holds true with
\begin{equation}\label{eq:gamma_grad}
\gamma(\delta) = C \delta^{-5/2}R^{-\sqrt{4\pi/\delta}-4}
\end{equation}
with $C$ independent of $R$ and $\delta$. 
\item If $\udag \in H^p\left(\SS^2\right)$, then Assumption \ref{ass:vsc} holds true with any 
bounded $D\left(F\right)$, $\manifold = \SS^2$, $r= 1$, some $\beta,\beta'>0$ and
\begin{align*}
\varphi\left(\tau\right) = \beta' \varphi_{2p} \left(\tau\right), \qquad \tau > 0.
\end{align*}
\end{enumerate}
\end{lem}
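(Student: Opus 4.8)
The plan is to establish Part 1 --- the only step needing a genuinely new computation --- and then to read off Parts 2 and 3 by feeding the resulting weight $\lambda$ into Theorem~\ref{thm:S2analytic}(4) and into the machinery of \S~\ref{sec:tik}, respectively. The observation that makes Part 1 work is that the averaging operator $M$ collapses each spherical--harmonic projection onto a single Legendre polynomial:
\[
(MY)(x,t)=Y(x)\,\Legr_m(t)\qquad\text{for all }Y\in\mathcal{H}_m,\ x\in\SS^2 .
\]
I would prove this either by decomposing $Y|_{\SS^2}$ into irreducibles under the rotations fixing $x$ --- only the zonal part $c\,\Legr_m(\langle x,\cdot\rangle)$ survives the circle average, $\Legr_m(\langle x,y\rangle)=\Legr_m(t)$ on $\{y:\langle x,y\rangle=t\}$, and $Y(x)=c$ --- or directly from the identity $(Q_mg)(x)=\tfrac{2m+1}{2}\int_{-1}^1\Legr_m(t)\,(Mg)(x,t)\,\D t$ in the proof of Theorem~\ref{thm:S2analytic}(2), applied to $g=Q_lf$ for each $l$ so as to read off the Legendre coefficients of $(M(Q_mf))(x,\cdot)$.

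For Part 1, set $\sigma_m:=(m+1)(m+2)R^{-m-3}$, so that $\OpL=\sum_m\sigma_m Q_m$. The reproducing kernel of $\mathcal{H}_m$ and $\|\Legr_m(\langle x,\cdot\rangle)\|_{L^2(\SS^2)}^2=4\pi/(2m+1)$ give $\|Q_mf\|_{L^\infty(\SS^2)}\le\sqrt{(2m+1)/(4\pi)}\,\|f\|_{L^2(\SS^2)}$, so the series for $\OpL f$ converges in $C(\SS^2)$ and, by continuity of $M$ and the identity above,
\[
(M\OpL f)(x,t)=\sum_{m=0}^{\infty}\sigma_m\,(Q_mf)(x)\,\Legr_m(t).
\]
Each $\Legr_m$ is a polynomial, so by the estimate $\sup_{z\in\partial\E_r}|\Legr_m(z)|\le\exp(mr)$ from the proof of Theorem~\ref{thm:S2analytic}(2) this series converges locally uniformly on $\E_{\ln R}=\E_{B_\lambda}$ and hence is the holomorphic extension of $(M\OpL f)(x,\cdot)$. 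For $0<r<\ln R$, writing $x_0:=\exp(r)/R\in[R^{-1},1)$, I would estimate
\[
\exp(-\lambda(r))\sup_{z\in\partial\E_r}\bigl|\widetilde{(M\OpL f)}(x,z)\bigr|
\;\le\;\frac{(R-\exp(r))^4}{R^{3}}\sum_{m\ge0}(m+1)(m+2)\sqrt{\tfrac{2m+1}{4\pi}}\,x_0^{m}\,\|f\|_{L^2(\SS^2)}
\]
and then use $(R-\exp(r))^4=R^4(1-x_0)^4$ together with the elementary bound $\sum_m(m+1)(m+2)\sqrt{2m+1}\,x_0^m\le C(1-x_0)^{-7/2}$ (uniform in $x_0\in[0,1)$) to conclude that the right-hand side is $\le C\,R\,(1-x_0)^{1/2}\|f\|_{L^2(\SS^2)}\le C\,R\,\|f\|_{L^2(\SS^2)}$, uniformly in $r$ and $x$. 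Thus $\OpL:L^2(\SS^2)\to\A^{\lambda}(\SS^2)$ is bounded; as the penalty is quadratic this is \eqref{condition1} with $q=2$, and Assumption~\ref{ass:interp_ineq} then follows from Remark~\ref{rem:smoothness}, whose hypothesis \eqref{condition2} is precisely Theorem~\ref{thm:S2analytic}(4).

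To make $\gamma$ explicit (Part 2) I would compute $\lambda^*$: substituting $u=\exp(r)\in[1,R)$ in $\lambda^*(s)=\sup_{0\le r<\ln R}\bigl[sr+4\ln(R-\exp(r))\bigr]$ gives the maximiser $u=sR/(s+4)$ and, as $s\to\infty$,
\[
\lambda^*(s)=(s+4)\ln R-4\ln\tfrac{s+4}{4}-4+o(1),\qquad
\exp(-\lambda^*(s))=R^{-(s+4)}\Bigl(\tfrac{s+4}{4}\Bigr)^{4}\exp(4)\,(1+o(1)).
\]
Inserting $m(\delta)=\lfloor\sqrt{4\pi/\delta}\rfloor-1$ (chosen so that $\kappa_{m(\delta)}=(m(\delta)+1)^2/(4\pi)\le1/\delta$) into Theorem~\ref{thm:S2analytic}(4) and multiplying by the operator bound of Part 1 via Remark~\ref{rem:smoothness}, the prefactor $c_\lambda+\sqrt{4\pi/\delta}\,d_\lambda$ there, combined with the algebraic factor $(s+4)^4\sim(4\pi/\delta)^2$ inside $\exp(-\lambda^*(s))$, produces the power $\delta^{-5/2}$, while the powers of $R$ collect to $R^{-\sqrt{4\pi/\delta}-4}$; this gives \eqref{eq:gamma_grad}. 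For Part 3, $\OpL^*\OpL$ acts on $\mathcal{H}_m$ by multiplication with $\sigma_m^2$ and $-\ln\sigma_m^2=2(m+3)\ln R-2\ln\bigl((m+1)(m+2)\bigr)$ grows linearly in $m$, so $\varphi_p(\sigma_m^2)\sim(\mathrm{const}\cdot m)^{-p}$ --- precisely the weights of $\|g\|_{H^p(\SS^2)}^2\sim\sum_{m\ge0}(1+m)^{2p}\|Q_mg\|_{L^2(\SS^2)}^2$. Hence $\udag\in H^p(\SS^2)$ is equivalent to the spectral source condition $\udag=\varphi_p(\OpL^*\OpL)w$ with $w\in L^2(\SS^2)$, and Corollary~\ref{cor:vsc} (i.e.\ Proposition~\ref{prop:vsc_valid} with $\manifold=\SS^2$, whose hypotheses are met because of Part 2) delivers Assumption~\ref{ass:vsc} with $r=1$ and $\varphi=\beta'\varphi_{2p}$.

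The conceptual content --- that $M$ turns $\mathcal{H}_m$ into $\Legr_m$ --- is entirely elementary, so the hard part will be the bookkeeping in Parts 1--2: summing $\sum_m m^{5/2}x_0^m$ against the exactly compensating factor $(R-\exp(r))^4=R^4(1-x_0)^4$, and, above all, tracking the precise powers of $\delta$ and of $R$ through $\lambda^*$, through the polynomial prefactors of Theorem~\ref{thm:S2analytic}(3)--(4), and through the operator norm of $\OpL$, so as to land exactly on $\gamma(\delta)=C\,\delta^{-5/2}R^{-\sqrt{4\pi/\delta}-4}$ with $C$ independent of $R$.
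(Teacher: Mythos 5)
Your proposal is correct and follows essentially the same route as the paper's proof: the identity $(MY)(x,t)=Y(x)\Legr_m(t)$ for $Y\in\mathcal{H}_m$ (obtained from the same Legendre-coefficient computation), the bound $\sup_{z\in\partial\E_r}|\Legr_m(z)|\leq\exp(mr)$ combined with an $L^2\to L^\infty$ bound for $Q_m$ and the compensating factor $(R-\exp(r))^4$ to get $\|\OpL\|_{L^2\to\A^\lambda}\leq CR$, the same maximizer $\exp(r)=sR/(s+4)$ for $\lambda^*$ fed into Theorem~\ref{thm:S2analytic}(4), and Corollary~\ref{cor:vsc} for Part~3. The only (harmless) deviation is that you verify the equivalence $\udag\in H^p(\SS^2)\Leftrightarrow\udag=\varphi_p(\OpL^*\OpL)w$ directly from the singular values $\sigma_m=(m+1)(m+2)R^{-m-3}$, whereas the paper cites \cite[Prop.~16]{h00}.
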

\begin{proof}
\emph{Part 1:} First note that for $g_m\in \mathcal{H}_m$ and $n\in\{0,1,\dots,\}$ we have 
\begin{align*}
\int_{-1}^1\Legr_n(t)(Mg_m)(x,t)\D t
&= \int_{-1}^1\frac{\Legr_n(t)}{2\pi\sqrt{1-t^2}} \int_{\langle x,y\rangle = t}g_m(y)\D y\D t\\
&= \int_{\SS^2}\Legr_n(\langle x,y\rangle) g_m(y) \,\mathrm d y\\
&= \frac{4\pi}{2m+1}(Q_ng_m)(x) \\
&= \delta_{n,m}\frac{4\pi}{2m+1}g_m(x).
\end{align*}
As $\{\Legr_m/\sqrt{2m+1}\}$ is an orthonormal basis of $L^2(-1,1)$, we find that 
\(
(Mg_m)(x,t) = \frac{p_m(t)}{4\pi}g_m(x). 
\)
Together with \eqref{sat3} we obtain 
\[
(MTf)(x,t) = \sum_{m=0}^{\infty} \frac{p_m(t)}{4\pi} \frac{(m+1)(m+2)}{R^{m+3}} (Q_mf)(x).
\]
Using the bound $|p_m(z)|\leq \exp(mr)$ for $z\in\partial\mathcal{E}_r$ 
from the proof of Theorem \ref{thm:S2analytic} and $\|Q_mf\|_{L^{\infty}(\SS^2)}
\leq (2m+1)\|f\|_{L^2(\SS^2)}$ we obtain 
\begin{align*}
\|Tf\|_{\A^{\lambda}(\SS^2)}
&\leq \sup_{r\geq 0}\sup_{z\in\partial\mathcal{E}_r} \sup_{x\in\SS^2}\exp(-\lambda(r))|(MTf)(x,z)|\\
&\leq \sup_{0\leq r< \ln(R)} (R-e^r)^4
\sum_{m=0}^{\infty} \frac{p_m(z)}{4\pi} \frac{(m+1)(m+2)}{R^{m+3}} \|Q_mf\|_{L^{\infty}(\SS^2)}\\
&\leq  \sup_{0\leq r< \ln(R)} \frac{(R-e^r)^4}{4\pi}
\sum_{m=0}^{\infty}\frac{(m+1)(m+2)(2m+1)e^{rm}}{R^{m+3}}\|f\|_{L^2(\SS^2)}\\
&\leq CR\|f\|_{L^2}.
\end{align*}
The last inequality follows by evaluating derivatives of the geometric series, and $C$ is a 
generic constant independent of $R$.\\
\emph{Part 2:} 
The supremum of $r\mapsto sr-\lambda(r)$ is attained if $e^r=sR/(4+s)$, so 
\begin{align*}
\exp\left(-\lambda^*(s)\right) = \left(\frac{4+s}{4R}\right)^4 \left(\frac{4+s}{sR}\right)^s
\leq Cs^4 R^{-s-4}.
\end{align*}
Now \eqref{eq:gamma_grad} follows from Theorem \ref{thm:S2analytic}. \\
\emph{Part 3:} The proof is similar to the heat equation case using \cite[Prop. 16]{h00}. There it has been shown that
\[
\udag  = \varphi_p \left(T^*T\right) \omega, \omega \in L^2 \left(\SS^2\right) \qquad \Leftrightarrow \qquad \udag \in H^p \left(\SS^2\right).
\]
Thus the spectral source condition is satisfied by assumption, and it follows from Corollary \ref{cor:vsc} that \eqref{eq:vsc} holds true with $r = 1$, and $\varphi = \beta'\varphi_{2p}$ for any bounded $D\left(T\right)$. 
\end{proof}

\begin{thm}\label{Theorem8}
Let $R>1$, $\OpL$ as in \eqref{sat3} and $\udag \in H^p\left(\SS^2\right)$ with some $p>0$. Suppose furthermore that $\alpha = \alpha \left(\eta, \epsilon\right)$ is chosen such that \eqref{eq:PC_condition} holds true in the limit $\eta, \epsilon, \alpha \searrow 0$.

Then there exists $\eta_0 >0$ and a constant $C = C \left(R, \OpL, \udag\right)$ such that for any noise function $\xi \in L^1(\SS^2)$ fulfilling \eqref{eq:noisemodel} with $0 < \eta \leq \eta_0,~\epsilon \geq 0$, the following estimates are valid for Tikhonov regularization \eqref{eq:tik} with $\R\left(f\right) = \left\Vert f \right\Vert_{L^2 \left(\SS^2\right)}^2$, $r = 1$:
\begin{align*}
\|\hat{f}_{\alpha} - f^{\dagger}\|^2_{L^2(\SS^2)} &\leq C \left( \frac{\epsilon}{\alpha} + \frac{1}{\alpha^2}\frac{1}{R^6} \eta^{-3} R^{-2\sqrt{\pi/\eta}} +  \left(-\ln \alpha\right)^{-2p} \right)\left(1+o\left(1\right)\right),\\
\|\OpL(\hat{f}_{\alpha}) - \OpL(f^{\dagger})\|_{L^1(\SS^2)} &\leq C \left( \epsilon + \frac{1}{\alpha}\frac{1}{R^6} \eta^{-3}R^{-2\sqrt{\pi/\eta}} +  \alpha \left(-\ln \alpha\right)^{-2p}\right)\left(1+o\left(1\right)\right),
\end{align*}
as $\eta, \epsilon, \alpha \searrow 0$.
The parameter $\alpha = \bar \alpha (\epsilon, \eta)$ can be chosen such that
\begin{align*}
\|\hat{f}_{\bar \alpha} - f^{\dagger}\|^2_{L^2(\SS^2)} &= \mathcal{O} \left(\max \left\lbrace \left(- \ln \epsilon\right)^{-2p}, \eta^{p} \right\rbrace\right)\\
\|\OpL(\hat{f}_{\bar \alpha}) - \OpL(f^{\dagger})\|_{L^1(\SS^2)} &= \mathcal{O} 
\left(\max \left\lbrace \epsilon, \eta^{\frac{2p-3}{2}} R^{-\sqrt{\pi/\eta}}\right\rbrace\right)
\end{align*}
as $\epsilon, \eta \searrow 0$. 
\end{thm}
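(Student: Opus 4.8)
The plan is to run the argument used to prove Theorem~\ref{Theorem5}, with the ingredients for the backwards heat equation replaced by those of Lemma~\ref{lem:gradiometry}. Since by assumption $\alpha=\alpha(\eta,\epsilon)$ is chosen so that \eqref{eq:PC_condition} holds in the limit $\eta,\epsilon,\alpha\searrow 0$, Proposition~\ref{prop:vsc_valid} (equivalently Corollary~\ref{cor:vsc}) applies; together with Part~3 of Lemma~\ref{lem:gradiometry} this shows that $\udag$ satisfies the variational source condition \eqref{eq:vsc} for $\sol=\ualdel$ with $r=1$ and $\varphi=\beta'\varphi_{2p}$. Part~2 of Lemma~\ref{lem:gradiometry} supplies Assumption~\ref{ass:interp_ineq} with $q=2$ and $\gamma(\delta)=C\delta^{-5/2}R^{-\sqrt{4\pi/\delta}-4}$ for $0<\delta\le\delta_0$. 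Hence both hypotheses of Theorem~\ref{Theorem1} are in place, with the admissible range $0<\eta\le\eta_0$ determined by $\delta_0$ and the choice $b=2$ made below.

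Next I would apply Theorem~\ref{Theorem1} with $q=q'=2$ and $b=2$, so that $\frac{2b}{b-1}\eta=4\eta$ and $\gamma(4\eta)=C'\eta^{-5/2}R^{-\sqrt{\pi/\eta}-4}$; consequently $\left(\frac{2}{\alpha}\eta\gamma(4\eta)\right)^{2}$ is of order $\alpha^{-2}\eta^{-3}R^{-2\sqrt{\pi/\eta}}$ up to a fixed power of $R$. For the remaining term I would evaluate $\psi$ from \eqref{eq:defi_associate} with $\varphi=\beta'\varphi_{2p}$, exactly as for $\psi_p$, obtaining $\psi(\alpha)=\beta'\left(\ln\frac{1}{2p\beta'\alpha}\right)^{-2p}(1+o(1))\le C(-\ln\alpha)^{-2p}(1+o(1))$ as $\alpha\searrow 0$. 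Inserting these expressions into \eqref{Inequality4} and \eqref{Inequality5} and using $\breg{\ualdel}=\left\Vert\ualdel-\udag\right\Vert_{L^2(\SS^2)}^{2}$ (since $\X$ is a Hilbert space) yields the first pair of displayed estimates.

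For the order-optimal parameter choice I would, as in Theorem~\ref{Theorem5}, split into two regimes according to which of the first two terms on the right-hand side of the $L^2$ estimate dominates. If $\epsilon/\alpha$ dominates, pick $\alpha_1$ balancing it against the $(-\ln\alpha)^{-2p}$ term, which gives $\epsilon/\alpha_1=(-\ln\epsilon)^{-2p}(1+o(1))$, hence an $L^2$ rate $(-\ln\epsilon)^{-2p}$ and an $L^1$ rate $\epsilon$ (in this regime the middle term is dominated and $\alpha_1\psi(\cdot)\sim\epsilon$). If instead the $\eta$-term dominates, pick $\alpha_2$ balancing the middle and last terms of the $L^2$ estimate; using that $-\ln\alpha_2$ is asymptotically governed by the factor $\sqrt{\pi/\eta}\,\ln R$ coming from $\gamma$, this reduces to $(-\ln\alpha_2)^{-2p}=C\eta^{p}(1+o(1))$, i.e.\ an $L^2$ rate $\eta^{p}$, and feeding the same $\alpha_2$ into the $L^1$ estimate produces the rate $\eta^{(2p-3)/2}R^{-\sqrt{\pi/\eta}}$. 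Taking the maximum over the two regimes yields the asserted rates; since for either choice the right-hand side of \eqref{Inequality4} tends to $0$, Remark~\ref{rem:pc} guarantees that \eqref{eq:PC_condition} is satisfied, so the appeal to Proposition~\ref{prop:vsc_valid} is legitimate.

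The step I expect to demand real work is the last one. Because $\varphi$ is only logarithmic while $\gamma$ decays merely like $\exp(-c/\sqrt\delta)$, far more slowly than the $\exp(-c/\delta^{2})$ of the heat equation in Theorem~\ref{Theorem5}, the equation defining $\alpha_2$ is genuinely transcendental, and one has to track carefully how the polylogarithmic correction $(-\ln\alpha_2)^{p}$ multiplying $\eta\gamma(4\eta)$ interacts with the super-exponential factor $R^{-\sqrt{\pi/\eta}}$ so that precisely the stated powers of $\eta$ emerge. Once this asymptotic bookkeeping is done, the remainder is a routine substitution of Lemma~\ref{lem:gradiometry} into Theorem~\ref{Theorem1}, keeping the $(1+o(1))$ factors separate from the leading terms.
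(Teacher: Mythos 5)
Your proposal follows exactly the route the paper takes: invoke Proposition~\ref{prop:vsc_valid} and Lemma~\ref{lem:gradiometry} to secure the variational source condition with $\varphi=\beta'\varphi_{2p}$ and Assumption~\ref{ass:interp_ineq} with the stated $\gamma$, apply Theorem~\ref{Theorem1} with $q=q'=2$ and $b=2$, compute $\psi$, and then balance the dominant noise term against the $\psi$-term in the two regimes, checking \eqref{eq:PC_condition} via Remark~\ref{rem:pc}. The paper's own proof is in fact terser than your write-up, and your identification of the transcendental equation for $\alpha_2$ (tracking how the $(-\ln\alpha_2)^{p}$ correction interacts with $R^{-\sqrt{\pi/\eta}}$) as the only step requiring genuine bookkeeping is accurate.
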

\begin{proof}
As \eqref{eq:PC_condition} holds true, we know from Proposition~\ref{prop:vsc_valid} that \eqref{eq:vsc} can be applied with $f = \ualdel$. From Lemma \ref{lem:gradiometry} we know that we may apply Theorem \ref{Theorem1} with $\varphi\left(\tau\right) = \beta' \varphi_{2p} \left(c\tau\right)$ and $\gamma$ as in \eqref{eq:gamma_grad}. This yields the claimed error estimates.

To obtain the asserted convergence rates, we may again distinguish if the $\epsilon$ term or the $\eta$ term dominates. Then we balance the dominating term with the pure $\alpha$ term to obtain a choice of $\alpha$ satisfying \eqref{eq:PC_condition}. The maximum of both cases yields the claim.
\end{proof}

\section{Conclusions}

We have extended a recent approach for inverse problems with impulsive noise to the case 
of infinitely smoothing forward operators using standard Sobolev smoothness assumptions for 
the solution. 
Remarkably, one obtains high order polynomial rates of convergence in the size  
$\eta$ of the corrupted domain, even though the underlying problem is exponentially ill-posed 
in the classical sense.
We examined two exponentially ill-posed problems arising in PDEs and showed that our  
analysis can be applied.

Our study gives rise to several further interesting questions: One concerns the extension 
of our analysis to more general domains in $\Rset^d$. A possible strategy could be the 
use of local averages as introduced here for the case of $\SS^2$. However, boundaries would 
cause technical difficulties, and the choice of approximating subspaces is not obvious. 
So far we have only studied a priori parameter choice rules, i.e. the solution smoothness characterized by $\varphi$ has to be known for choosing $\alpha$. The popular discrepancy principle does not apply in our context (as only $\left\Vert \xi \right\Vert_{L^1 \left(\mi\right)}$ is small, but $\mc$ is unknown), and thus other a posteriori parameter choice rules would be of interest for practical applications.
Other open questions include the optimality of the error bounds and an analysis of iterative 
regularization methods for nonlinear inverse problems with impulsive noise. 

\section*{Acknowledgement}

Financial support by the German Research Foundation DFG through subprojects A04 and C09 of CRC 755 is gratefully acknowledged. 

\bibliography{impulsive_noise}{}
\bibliographystyle{plain}
\end{document}